\newcommand{\otoprule}{\midrule[\heavyrulewidth]} 
\newcommand{\define}[1]{\textbf{#1}}
\newcommand{\M}{\mathcal{M}}
\newcommand{\I}{\mathbb I}
\newcommand{\J}{\mathbb J}
\newcommand{\Irel}{\mathbin{\I}}
\newcommand{\R}{\mathbb R}
\newcommand{\CC}{\mathbb C}
\newcommand{\Z}{\mathbb Z}
\newcommand{\C}{\mathcal C}
\newcommand{\D}{\mathcal D}
\newcommand{\E}{\mathcal E}
\newcommand{\V}{\mathcal{V}}
\newcommand{\PP}{\mathcal{P}}
\newcommand{\A}{\mathcal A}
\newcommand{\B}{\mathcal B}
\newcommand{\Rbar}{\overline{\R}}
\newcommand{\Zbar}{\overline{\Z}}
\newcommand{\greeq}{\succcurlyeq}
\renewcommand{\ge}{\geqslant}
\renewcommand{\le}{\leqslant}
\newcommand{\relation}{\mathrel{\lJoin}}
\newcommand{\Rrelation}{\mathrel{\le_R}}
\newcommand{\Srelation}{\mathrel{\le_S}}
\newcommand{\RSrelation}{\mathrel{\le_{RS}}}
\newcommand{\RtoSrelation}{\mathrel{\le_{[RS]}}}
\newcommand{\one}{\mathbb{1}}
\newcommand{\lefttruthval}{\left\lceil}
\newcommand{\righttruthval}{\right\rceil}
\DeclareFontFamily{U}  {MnSymbolA}{}
\DeclareSymbolFont{MnSyA}         {U}  {MnSymbolA}{m}{n}
\DeclareFontShape{U}{MnSymbolA}{m}{n}{
    <-6>  MnSymbolA5
   <6-7>  MnSymbolA6
   <7-8>  MnSymbolA7
   <8-9>  MnSymbolA8
   <9-10> MnSymbolA9
  <10-12> MnSymbolA10
  <12->   MnSymbolA12}{}
\DeclareFontShape{U}{MnSymbolA}{b}{n}{
    <-6>  MnSymbolA-Bold5
   <6-7>  MnSymbolA-Bold6
   <7-8>  MnSymbolA-Bold7
   <8-9>  MnSymbolA-Bold8
   <9-10> MnSymbolA-Bold9
  <10-12> MnSymbolA-Bold10
  <12->   MnSymbolA-Bold12}{}
\newcommand{\colim}[2]{\operatorname{colim}_{#1}(#2)}
\renewcommand{\lim}[2]{\operatorname{lim}_{#1}(#2)}
\DeclareFontFamily{U}{mathx}{}%\hypenchar\font45}
\DeclareFontShape{U}{mathx}{m}{n}{
  <5> <6> <7> <8> <9> <10>
  <10.95> <12> <14.4> <17.28> <20.74> <24.88>
  mathx10
  }{}
\DeclareSymbolFont{mathx}{U}{mathx}{m}{n}
\DeclareMathAccent{\widecheck}{0}{mathx}{"71}
\newcommand{\copre}[1]{\widecheck{#1}}
\newcommand{\presheaf}[1]{\widehat{#1}}
\newcommand{\opcopre}[1]{\widecheck{#1}}
\newcommand{\pre}[1]{\widehat{#1}}
\newcommand{\underlying}[1]{\underline{#1}}
\newcommand{\cl}{\mathrm{cl}}
\DeclareMathOperator{\dd}{d}
\newcommand{\op}{{\mathrm{op}}}
\newcommand{\truefalse}{\mathrm{Truth}}
\newcommand{\Truth}{\truefalse}
\newcommand{\true}{\mathrm{true}}
\newcommand{\false}{\mathrm{false}}
\newcommand{\bigwedgie}{\textstyle\bigwedge}
\newcommand{\biggyvee}{\textstyle\bigvee}
\newcommand{\logand}{\mathbin{\&}}
\newcommand{\entails}{\vdash}
\newcommand{\curlyge}{\succcurlyeq}
\newcommand{\Set}{\mathsf{Set}}
\newcommand{\Vcat}{\V\mathsf{Cat}}
\newcommand{\cotensor}{\pitchfork}
\newcommand{\isomorphic}{\cong}
\newcommand{\equivalent}{\simeq}
\newcommand{\pt}{\star}
\newcommand{\prof}{\rightsquigarrow} % Profunctor arrow
\newcommand{\powerset}{\PP}
\newcommand{\bigproduct}{\textstyle\prod}
\newcommand{\bigcoproduct}{\textstyle\coprod}
\newcommand{\ctensor}{\odot}
\renewcommand{\phi}{\varphi}
\renewcommand{\epsilon}{\varepsilon}
\newcommand{\scare}[1]{`#1'}
\DeclareMathOperator{\Inv}{Inv}
 \DeclareMathOperator{\Fun}{Fun}
 \DeclareMathOperator{\Image}{Im}
 \DeclareMathOperator{\Nat}{Nat}
\DeclareMathOperator{\Ob}{Ob}
\DeclareMathOperator{\Aut}{Aut}
\DeclareMathOperator{\Yoneda}{\mathcal{Y}}
\DeclareMathOperator{\Fix}{Fix}
  \DeclareMathOperator{\ob}{ob}
  \DeclareMathOperator{\Cvx}{Cvx}
\DeclareMathOperator*{\bigforall}{\forall}
\DeclareMathOperator{\fenchb}{\mathbb{L}^\ast}
\DeclareMathOperator{\rfenchb}{\mathbb{L}_\ast}
\newcommand{\fenchbop}[1]{\fenchb(#1)}
\newcommand{\rfenchbop}[1]{\rfenchb(#1)}
\newcommand{\fench}[1]{#1^{\star}}
\newcommand{\doublefench}[1]{#1^{\star\star}}
\newcommand{\dual}[1]{{#1}^{\#}}
  \newtheorem*{Slogan*}{Slogan}
\newtheorem{thm}{Theorem}
\theoremstyle{definition}
\newcommand{\coleqq}{:=}
\title{The Legendre-Fenchel transform from a category theoretic perspective}
\author{Simon Willerton}
\date{}
\begin{document}
\maketitle

\begin{abstract}
The Legendre-Fenchel transform is a classical piece of mathematics with many applications.  In this paper we show how it arises in the context of category theory using categories enriched over the extended real numbers $\overline{ \mathbb{R}}:=[-\infty,+\infty]$.  A key ingredient is Pavlovic's ``nucleus of a profunctor'' construction.  The pairing between a vector space and its dual can be viewed as an $\overline {\mathbb{R}}$-profunctor; the construction of the nucleus of this profunctor is the construction of a lot of the theory of the Legendre-Fenchel transform.  For a relation between sets viewed as a $\{\true,\false\}$-valued profunctor, the construction of the nucleus is the construction of the Galois connection associated to the relation.

One insight given by this approach is that the relevant structure on the function spaces involved in the Legendre-Fenchel transform is something like a metric but is asymmetric and can take negative values.  This `$\overline {\mathbb{R}}$-structure' is a considerable refinement of the usual partial order on real-valued function space and it allows a natural interpretation of Toland-Singer duality and of the two tropical module structures on the set of convex functions.
\end{abstract}

\tableofcontents
\section{Introduction}
One role of category theory is as a kind of metamathematics: it gives a language for describing how seemingly different constructions in disparate parts of mathematics are really the same.  Until the 1970s, areas such as logic, algebra, algebraic geometry and algebraic topology were the areas in which category theory had most reach, but then Lawvere~\cite{Lawvere:MetricSpaces} showed using \emph{enriched} category theory  that category theory was entwined with metric space theory.  %The recent development of  the magnitude of metric spaces~\cite{Leinster:Magnitude}, which has applications in biodiversity measurement~\cite{LeinsterCobbold:Diversity}, came from considerations in enriched category theory.  

Following Lawvere~\cite{Lawvere:TakingCategoriesSeriously}
and taking {enriched} categories seriously, we will see here how, from the pairing between a vector space and its dual, the \emph{nucleus of a profunctor construction}~\cite{Pavlovic:FCA} gives rise to a large amount of the theory around the Legendre-Fenchel transform and convex functions.  We will also see how the  Galois correspondence arising from a relation between sets is an example of this nucleus from a profunctor construction; other examples of this construction include the Isbell completion of a category, the Dedekind-MacNeille completion for posets, the fuzzy concept lattice arising from a fuzzy context~\cite{Belohlavek:FuzzyGaloisConnections,Pavlovic:FCA}, the tropical span of a tropical matrix~\cite{DevelinSturmfels:Tropical} and the directed tight span for generalized metric spaces~\cite{Willerton:Isbell}.

\subsection{The basic idea}
Here we will recall the construction of Galois connection from a relation  --- which is a fundamental construction across mathematics --- and also recall the construction of the Legendre-Fenchel transform and then briefly explain how they have a common refined generalization in the nucleus of a profunctor construction.

\subsubsection{Relations and Galois connections.}
Suppose you have two sets $G$ and $M$ and a relation $\I $ between them.   Two examples to bear in mind here are the following:
\begin{enumerate}
    \item from algebraic geometry, $G=\CC^n$, $M=\CC[x_1,\dots x_n]$ and $x\Irel p$ if $p(x)=0$;
    \item from number theory, $G$ is a field $L$ equipped with a subfield $K\subset L$, $M=\Aut(L,K)$ the field automorphisms fixing $K$, and $\ell\Irel \phi$ if $\phi(\ell)=\ell$.
    %\item $G$ is a set of objects, $M$ is a set of attributes, and $g\Irel m$ if object $g$ has attribute $m$.
\end{enumerate}
We write  $\PP(G)$ for the poset of subsets of $G$ ordered by subset inclusion.  Similarly, we write $\PP(M)^\op$ for the poset of subsets of $M$ with the opposite order.  From $\I $ we get a pair of order preserving functions
  \[\I ^\ast\colon \PP(G)\rightleftarrows \PP(M)^\op\colon \I _\ast,\]
where 
\begin{align*}
\I^\ast(S)&\coloneqq \{m\in M\mid s \Irel  m\ \text{for all } s\in S\},\\
\I_\ast(T)&\coloneqq \{g\in G\mid g\Irel t\ \text{for all } t\in T\}.
\end{align*}
These form a \define{Galois connection} in that 
\[ S \subseteq \I _\ast(T) \iff  \I ^\ast(S)\supseteq T,\]
as both sides are saying that every element of $T$ is related to every element of $S$.
We then get a \define{closure operation} on each powerset:
  \[\I _\ast \I ^\ast\colon \PP(G)\to \PP(G),\quad \I ^\ast \I _\ast\colon \PP(M)^{\op}\to \PP(M)^{\op}.\]
These are idempotent --- $(\I _\ast \I ^\ast)^2=\I _\ast \I ^\ast$ and $(\I ^\ast \I _\ast)^2=\I ^\ast \I _\ast$ --- and they satisfy $S\subset \I _\ast \I ^\ast(S)$ and $T\supset \I ^{\ast} \I_{\ast}(T)$ for all $S\subset G$, $T\subset M$, thus they merit the name closure operations.  The \define{closed subsets} are those which are fixed by the closure operations, i.e.~those that are equal to their closure.  The totality of closed subsets gives two posets  $\PP_{\cl}(G)$ and   $\PP_{\cl}(G)$.
The Galois connection restricts to a \define{Galois correspondence}, i.e.~a duality, between the closed subsets of $G$ and the closed subsets of $M$:
\[\I ^\ast\colon \PP_{\cl}(G)\cong \PP_{\cl}(M)^\op\colon \I _\ast.\]

We can look at what this gives in the examples mentioned above.
\begin{enumerate}
\item In the algebraic geometry example,
%This construction crops up in many places in mathematics, further examples will be given below, but let me just mention here the example from algebraic geometry.  We take $G$ to be the set of points in $\CC^n$, complex $n$-space, and $M$ to the set $\CC[x_1,\dots,x_n]$ of polynomials in $n$-variables.  The relation we take to be $x\Irel p$ when $p(x)=0$.  Then t
the closed subsets of $\CC^n$ are precisely the algebraic sets (that is those defined by a set of polynomials) and the closed subsets of $\CC[x_1,\dots,x_n]$ are precisely the radical ideals.  The Galois correspondence is the fundamental correspondence of algebraic geometry.
\item In the number theory example, provided the extension is Galois, the closed subsets of $L$ are the intermediate fields between $L$ and $K$, while the closed subsets of $\Aut(L,K)$ are precisely the Krull-closed subgroups.  The Galois correspondence is \emph{the} classical Galois correspondence.
\end{enumerate}
\subsubsection{The Legendre-Fenchel transform.}
Suppose that $V$ is a real vector space, $\dual{V}$ is its linear dual and $\Fun(V,\Rbar)$ denotes the space of functions from $V$ to the extended real numbers $\Rbar:=[-\infty,+\infty]$.  There is a standard  pair of transforms between function spaces% and $\Rbar$ is the extended real numbers $[-\infty,+\infty]$.   If 
\[\fenchb \colon \Fun({V},\Rbar)\rightleftarrows \Fun(\dual{V},\Rbar)\colon \rfenchb,\]
given by
  \[\fenchbop{f}(k)\coloneqq \sup_{x\in V}\big\{\langle k,x\rangle -f(x)\big\},\quad 
  \rfenchbop{g}(x)\coloneqq \sup_{k\in \dual{V}}\big\{\langle k,x\rangle -g(k)\big\}.\]
The transform $\fenchb$ is the Legendre-Fenchel transform and we will refer to $\rfenchb$ as the reverse Legendre-Fenchel transform.%
\footnote{In Section~\ref{sec:LF} below, I will revert to the standard notation of writing $\fenchbop{f}$ as $\fench{f}$ and similarly $\rfenchbop{g}$ as $\fench{g}$.}
The function spaces each have a partial order on them using the total order on $\Rbar$, for $f_{1},f_{2}\colon V\to \Rbar$,
  \[f_1\greeq f_2 \quad\iff \quad f_1(x)\ge f_2(x) \quad\text{for all }x\in V.\]
We use the opposite order for the functions on $\dual{V}$.  With respect to these orders the two transforms form a Galois connection, so we get a closure operation on each function space.
  \[\rfenchb\fenchb\colon \Fun({V},\Rbar) \to \Fun({V},\Rbar); \quad 
    \fenchb\rfenchb\colon \Fun(\dual{V},\Rbar) \to \Fun(\dual{V},\Rbar).\]
These are idempotent and, % in fact what they are, are convex hull operators.  
for instance, $\rfenchb\fenchbop{f}\greeq f$.  In fact, $\rfenchb\fenchbop{f}$ is the pointwise-smallest, lower semi-continuous, convex function which dominates $f$; the lower-semicontinous part is only relevant when the function takes infinite values.  Now we can look at the functions fixed by these closure operators and these are precisely the lower-semicontinuous convex functions.  From this it follows that the Lengendre-Fenchel transform and its reverse restrict to a bijection, actually a Galois correspondence: using $\Cvx$ to denote the lower semi-continuous, convex functions we have
  \[\fenchb\colon\Cvx(V,\Rbar)\cong \Cvx(\dual{V},\Rbar)\,\colon\!\rfenchb.\]
This is known as Legendre-Fenchel duality.  It crops up in many areas of mathematics for instance in mathematical physics to translate between Hamiltonian and Lagrangian formalisms (see, e.g.,~\cite{Arnold:MathematicalMethods}), in optimization theory (see, e.g.,~\cite{Rockafellar}) and in large deviation theory (see, e.g.,~\cite{Tourchette}).

\subsubsection{The general construction}
The two constructions presented are clearly similar, they both involve a Galois connection to construct a duality (or a Galois correspondence if you prefer).  However, the similarity goes much deeper and it can be illuminated in the light of enriched category theory, as both of these constructions are examples of what, following Pavlovic~\cite{Pavlovic:FCA}, we will call the nucleus of a profunctor construction; Shen and Zhang~\cite{ShenZhang:CategoriesOverAQuantaloid} also considered the construction.   The terms used in the following paragraph will be explained in Section~\ref{sec:posetenriched}.

  For the construction you start with a profunctor between two sets or enriched categories, in the first case this is the relation between $G$ and $M$, thought of a function $G\times M\to \{\true,\false\}$, and in the second case this is the tautological pairing $V\times \dual{V}\to \R\subset \Rbar$ between the vector space and its dual.   You then consider the categories of presheaves on your original categories, in the first case this leads to the posets of subsets of $G$ and $M$ and in the second case to the spaces of functions on $V$ and $\dual{V}$.  From the profunctor you construct an enriched adjunction between the presheaf categories.  In the one case you get the Galois connection between the sets of subsets, in the other case you get the Legendre-Fenchel transform and its reverse.  The nucleus is then extracted as the parts of the presheaf categories on which the adjunction restricts to an equivalence.  In the one case you get the Galois correspondence, and often an interesting duality, in the other case you get the Legendre-Fenchel duality between convex functions.

Here the set of classical truth values $\{\true,\false\}$ and the set of extended real numbers $\Rbar$ both form examples of what a category theorist might call a complete and cocomplete, closed monoidal poset; other folk might call such a thing a commutative quantale or a complete, commutative, idempotent semiring.  In this paper we present the theory of categories and profunctors enriched over such things paying particular attention to these two main examples.

\subsection{What we gain}
A fair question to ask at this point is ``What do we gain from viewing Legendre-Fenchel duality as a profunctor nucleus?''  It could be a case of translating something familiar to an unfamiliar language for no real gain.  In fact, we gain a few things.  On the one hand we see that from a category theory point of view, the Legendre-Fenchel transform is inevitable once you have the pairing between a vector space and its dual.  You can see how much of the theory is just formal nonsense and how much is specific to the case in hand.  As the profunctor nucleus arises in various places, such as in tight spans for metric spaces, there is the potential for building bridges between different disciplines.

On the other hand, on a more concrete level, a fundamental insight that we gain is that the set of functions $\Fun(V,\Rbar)$ should be considered not as a poset but as an $\Rbar$-space, something like an asymmetric metric space with possibly negative distances.  There is an $\Rbar$-metric, like a very generalized notion of distance, 
 $\dd\colon \Fun(V,\Rbar)\times \Fun(V,\Rbar)\to \Rbar$ given by
 \[\dd(f_1,f_2)\coloneqq \sup_{x\in V} \{f_2(x)-f_1(x)\},\]
where some care is needed to understand the difference between infinite quantities, for example $(-\infty) - (-\infty)=-\infty$ and  $(+\infty) - (+\infty)=-\infty$, see Section~\ref{sec:tables} for details.
This satisfies the triangle inequality and $\dd(f,f)=0$ if $f$ takes a finite value somewhere, otherwise $\dd(f,f)=-\infty$.  However, it is not symmetric.

Underlying any such $\Rbar$-metric is a preorder $\greeq$, defined as follows:
  \[f_1\greeq f_2 \quad\iff \quad 0\ge \dd(f_1,f_2).\]
Unpacking the definition this gives the usual partial order on the space of functions described above.
So this $\Rbar$-metric on $\Fun(V,\Rbar)$ is a refinement of the usual partial order.

With respect to this $\Rbar$-metric, we find (Theorem~\ref{thm:LegendreIsShort}) that the Legendre-Fenchel transform is a distance non-increasing function so that $ \dd(f_1,f_2)\ge\dd(\fenchbop{f_1},\fenchbop{f_2})$, in other words,
 \[\sup_{x\in V}\{f_2(x)-f_1(x)\} \ge \sup_{k\in \dual{V}}\{ \fenchbop{f_1}(k) - \fenchbop{f_2}(k)\}.\]
We also learn (Theorem~\ref{thm:LFadjunction}) that the Legendre-Fenchel transform and its reverse form an adjunction which means that $\dd(\fenchbop{f},g)= \dd(f,\rfenchbop{g})$, in other words,
\[\sup_{k\in \dual{V}}\{ \fenchbop{f}(k) - g(k)\}= \sup_{x\in V}\{\rfenchbop{g}(x)-f(x)\}.\]
This is a refinement of the statement that the Legendre-Fenchel transform and its reverse form a Galois connection, i.e.~that $f\greeq \rfenchbop{g}$ if and only if $g\greeq\fenchbop{f}$. 

In the case that it is restricted to convex functions then we find (Theorem~\ref{thm:TolandSingerWeak}) that the Legendre-Fenchel transform is actually a distance preserving map, meaning $\dd(\fenchbop{f_1},\fenchbop{f_2})=\dd(f_1,f_2)$, in other words,
\[\sup_{k\in \dual{V}}\{ \fenchbop{f_1}(k) - \fenchbop{f_2}(k)\}= \sup_{x\in V}\{f_2(x)-f_1(x)\}.\]
This is not part of the standard treatment of the Legendre-Fenchel transform, but it is known as Toland-Singer duality.  It is a refinement of the standard order theoretic statement that for convex functions $f_1$ and $f_2$ we have $f_1\greeq f_2 \iff \fenchbop{f_2}\greeq \fenchbop{f_1}$.  The fact that it is stronger than that is illustrated in Figure~\ref{Figure:TolandSinger}.
\begin{figure}
\centering
\includegraphics{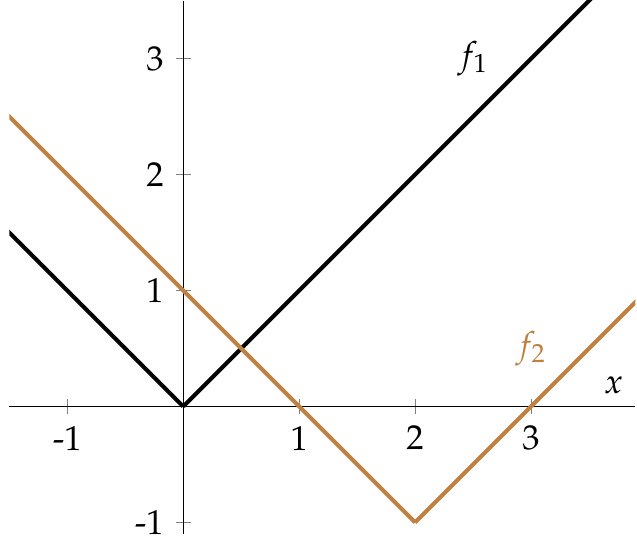}\qquad\includegraphics{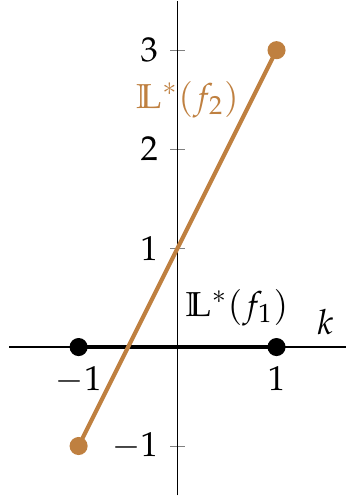}
{}
\caption{Illustrating Toland-Singer duality.  Note $\fenchbop{f_1}(k)=\fenchbop{f_2}(k)=+\infty$ when $\lvert k\rvert >1$.}
\label{Figure:TolandSinger}
\end{figure}

In the first picture of Figure~\ref{Figure:TolandSinger} we see two convex functions $f_1,f_2\colon \R\to \Rbar$.  Neither function dominates the other so the fact that the Legendre-Fenchel transform is order reversing tells us nothing about the transforms of these functions.  We see, however, that $\dd(f_1,f_2)=1$ and $\dd(f_2,f_1)=3$, so Toland-Singer duality tells us that $\dd(\fenchbop{f_1},\fenchbop{f_2})=1$ and $\dd(\fenchbop{f_2},\fenchbop{f_1})=3$ which we clearly see in the second picture.  The distance on the first picture is the maximum climb between graphs and the distance on the second picture is the maximum fall.

%\subsubsection{Completeness}
The nucleus of a profunctor is complete and cocomplete in a categorical sense.  What that means here in particular is that the space of convex functions has products, coproducts, tensors and cotensors (Theorem~\ref{thm:CvxLimitsColimits}).

The product of two convex functions is just  their pointwise maximum:
  \[f_1\sqcap f_2 (x)\coloneqq\max(f_1(x),f_2(x)).\]
The coproduct is the lower semi-continuous, convex hull of their pointwise minimum:
  \[f_1\sqcup f_2 \coloneqq\rfenchb\fenchbop{\min(f_1,f_2)}.\]
In the order theoretic world these are the least upper bound and greatest lower bounds of the functions, but in this metric world they satisfy stronger properties, namely
  \begin{align*}
  \dd(f,f_1\sqcap f_2)&=\max(\dd(f,f_1),\dd(f,f_2))\\
  \dd(f_1\sqcup f_2,f)&=\max(\dd(f_1,f),\dd(f_2,f))
  \end{align*}

The tensor product $\odot$ and cotensor product $\cotensor$ are actions of the monoid $(\Rbar,+)$ on the set of convex functions: for $a\in \Rbar$, $f\in\Cvx(V,\Rbar)$ and $x\in V$,
 \[(a\cotensor f)(x)=f(x)-a;\quad (a\odot f)(x)=f(x)+a.\]
The product $\sqcap$ and cotensor $\cotensor$ make the set of convex functions $f\in\Cvx(V,\Rbar)$ into a module over the tropical semiring $(\Rbar, \min,+)$, that is $\Rbar$ with minimum as addition and usual addition as multiplication.  Similarly, the coproduct $\sqcup$ and tensor $\odot$ make the set of convex functions $\Cvx(V,\Rbar)$ into a module over the tropical semiring $(\Rbar, \min,+)$ in a different way.

\subsection{Synopsis}
The bulk of the paper is Section~2 in which we go through a lot of essentially standard category theoretic notions such as presheaves, adjunctions, profunctors, completeness and cocompleteness, but we do this in the non-standard context of categories enriched over the extended real numbers $\Rbar$; this is intended to be suitable for those interested in the Legendre-Fenchel transform with little background in category theory.  The nucleus of a profunctor is described in Section~3 by analogy with constructing a pair of adjoint linear maps from a matrix.  In Section~4 we see how the usual theory of Galois connections from relations arises by considering the case of categories enriched over classical truth values.  The pay-off comes in Section~5 in which the preceding theory is put together for $\Rbar$-categories and a lot of the theory of the Legendre-Fenchel transform drop out, such as Toland-Singer duality and the two tropical module structures on the set of convex functions.

\subsection{Further questions}  There are several questions that this work naturally leads to: here are two of them.
\begin{itemize}
\item
What happens if we work over $\Zbar=\Z\cup \{-\infty,+\infty\}$?  Discrete convex analysis seems to be more subtle (see~\cite{Murota:DiscreteConvexAnalysis} and~\cite{Fujii:BThesis}) and it would be worth seeing if this approach leads to anything interesting.
\item
Fenchel Duality (see Theorem~31.1 of~\cite{Rockafellar}) involves both convex and concave functions; is it possible to formulate this in categorical terms and does it generalize to other situations?
\end{itemize}

\subsection{Acknowledgements}
I would like to thank Tom Leinster, Mark Kambites, Marianne Johnson, Hiroshi Hirai, Soichiro Fujii and Bruce Bartlett for useful comments and conversations.  I would also like to thank contributors to the $n$-Category Caf\'e  where earlier pieces of this work was posted. 

\section{Categories enriched over posets}
\label{sec:posetenriched}
In this section the theory of categories enriched over monoidal posets is presented with particular emphasis placed on enriching over the posets $\Rbar$ and $\Truth$.  The intention is that this should be accessible to a reader with an interest in the Legendre-Fenchel transform but with not much background in category theory.
 \subsection{Quantales, thin categories and idempotent semirings}
In this paper we consider categories enriched over monoidal posets.  The structure of a monoidal poset crops up in different areas with different names and different features emphasised, so it is worth explaining here how these connect.   Our main examples --- the extended real numbers $\Rbar$ with the order $\ge$ and the set of classical truth values $\Truth$ with entailment as the order --- are given after the definitions.

\subsubsection{Commutative quantales}
In concise terms, a \define{commutative quantale} is a complete lattice which has a commutative, unital, associative, binary operation $\otimes$ which distributes over arbitrary joins.  Unpacking this definition, we have a poset%
\footnote{I am assiduously not preferring either $\le$ or $\ge$ as my relation.  Here $\relation$ can be pronounced as \scare{is related to}.}
 $(Q,\relation)$ such that every subset of $W\subset Q$ has both a meet $\bigwedgie_{x\in W}x$ and a join $\biggyvee_{x\in W}x$; in particular, these satisfy, for all $w\in W$, 
   \[\bigwedgie_{x\in W}x\relation w\quad \text{and}\quad w\relation \biggyvee_{x\in W}x.\]
When $\relation$ is $\le$ then the meet $\bigwedgie$ is the infimum, and when it is $\ge$ then the meet  $\bigwedgie$ is the supremum.

This poset is equipped with a monoid structure $\otimes\colon Q\times Q \to Q$ with unit $\one\in Q$ and which satisfies distributivity over joins:
%  \[q\otimes \biggyvee X=\biggyvee\{q\otimes x\mid x\in X\}.\]
  \[q\otimes \biggyvee_{x\in W} x=\biggyvee_{x\in W} (q\otimes x).\]
From this data  we can define a residuation map $[{-},{-}]\colon Q\times Q\to Q$ by 
  \[[b,c]\coloneqq \biggyvee_{a\colon a\otimes b\relation c} a.\]
 This will satisfy the \define{adjunction property}
   \[a\otimes b\relation c \quad \Longleftrightarrow \quad a\relation [b,c].\]

\subsubsection{Idempotent semirings}
A \define{commutative idempotent semiring} is a commutative semiring in which the addition is idempotent.
Firstly, a commutative semiring, basically a ring without negatives, consists of two commutative monoid structures $(S,\oplus,0)$ and $(S,\otimes,1)$ on the same set $S$ such that $\otimes$ distributes over $\oplus$.  The addition is idempotent if 
  \[s\oplus s=s\quad\text{ for all } s\in S.\]
This allows us to define a partial order on $S$ by $a\relation b$ if and only if $a\oplus b=b$.  Then $\oplus$ becomes the join $\biggyvee$, and $\otimes$ distributes over it.  By induction this means that we have  joins for arbitrary finite subsets, and that multiplication distributes over these finite joins.  We say that $S$ is \define{complete} if arbitrary subsets have joins and multiplication distributes over arbitrary joins.  Arbitrary \emph{meets} can then be defined in terms of the joins: for a subset $W\subset S$ define $W'\coleqq \{y\in S \mid y\relation x\,  \text{for all}\, x\in W\}$ then $\bigwedgie_{x \in W}x=\biggyvee_{y\in W'} y$.  Again the residuation can be defined by 
  \[[b,c]\coloneqq \bigoplus_{a\colon a\otimes b\relation c} a.\]
One can see  in this manner that a complete, commutative, idempotent semiring is the same thing as a commutative quantale.

\subsubsection{Monoidal thin categories}
A \define{thin category} is a category in which there is at most one morphism between each pair of objects.  Working in a thin category is much simpler than working in an arbitrary category as, for instance, all diagrams commute.  Having a thin category is the same as having a transitive, reflexive relation on the objects of the category, the correspondence is defined via
  \[a\relation b\quad\Longleftrightarrow\quad \text{there is a morphism } a\to b.\]
This is not quite the same thing as having a partial order, however, as anti-symmetry might fail: we could have distinct objects $a$ and $b$ which have arrows $a\to b$ and $b\to a$.  Such a relation is called a preorder.  The condition that a thin category actually corresponds to a partial order is precisely the \define{skeletal condition} which is that all isomorphisms are equalities.  We can turn any thin category into a skeletal category by taking isomorphism classes of objects.

A \define{monoidal category} is a category $\C$ with a functor $\otimes\colon\C\times \C\to \C$  together with some associativity and unitality constraints.  In the case of a skeletal, thin category the constraints are just that the product should be associative and unital on the set of objects.  The monoidal structure is \define{symmetric} if this product is commutative.  The monoidal structure is \define{closed} if there is a functor $[{-},{-}]\colon \V^\op\times \V\to \V$ which is adjoint to $\otimes$ in the sense that 
  \[\text{there is a morphism }a\otimes b\to c \quad \Longleftrightarrow \quad \text{there is a morphism }a\to [b,c].\]
As you should be able to tell, this is the same as the condition on the residuation for a quantale.  There are notions of categorical products and coproducts, for thin categories these amount to precisely meets and joins for the associated preorders.  As the monoidal product has a right adjoint it will automatically distribute over joins.    Being complete and cocomplete in the categorical sense amounts, for a thin category, to having arbitrary products and coproducts.

From this concise discussion it is hopefully clear that a complete and cocomplete, skeletal, closed, symmetric monoidal thin category is the same thing as a commutative quantale.

\subsection{Our main examples}  The main examples of the above structures that we are interested in here are truth values and extended real numbers.  We will get to Galois connections using the first and the Legendre-Fenchel transform using the second.

\subsubsection{$\Truth$}
We take $\Truth$ to be the category with two objects $\true$ and $\false$ and a single non-identity arrow $\false\to \true$.  The monoidal product is taken to be \scare{logical and} $\logand$, and the residuation is implication, so $[a,b]= ( a \Rightarrow b)$.  The order on the set $\{\true,\false\}$ can be interpreted as \scare{entailment} $\entails$, so the only non-reflexive relationship is $\false\entails\true$.  The meet is the same as the product, namely logical and, but is often thought of as \scare{for all}.  The join is \scare{logical or}, often thought of as \scare{there exists}.

\subsubsection{$\Rbar$}  
\label{sec:tables}
To define the structure on the extended real numbers, we start with $\R$ equipped with the order $\ge$, so that meet $\bigwedgie$ is supremum and join $\biggyvee$ is infimum, this has usual addition as the monoidal product and subtraction as the residuation because
  \[a+b\ge c\quad \iff \quad a\ge c-b.\]
We make this structure complete by adding a maximum element $+\infty$ and a minimum element $-\infty$ to define $\Rbar$.  Then the requirement that $+$ distributes over infima ensures that there is a unique extension of $+$ to $\Rbar$.  Finally we can use the formula for residuation to extend subtraction to $\Rbar$:
  \[c-b=\inf_{a+b\ge c}a.\]
%We want to define a complete and cocomplete, closed symmetric monoidal structure on the real numbers $\R$ to interpret the Legendre-Fenchel transform in terms of enriched category theory.  We start with $\R$ equipped with usual addition $+$ and the usual order $\ge$.  This is residuated in the sense that there is the operation ${-}\colon \R\times \R\to \R$ which satisfies the following adjunction relation:
%  \[a+b\ge c\quad \iff \quad a\ge c-b.\]
%In order to ensure completeness and cocompleteness we add in a maximal element $+\infty$ and a minimal element $-\infty$ to obtain $\Rbar$.  It can be proved~\cite{Fujii:BThesis} that addition and residuation (aka subraction!) can be extended uniquely from $\R$ to $\Rbar$ to make $\Rbar$ into a commutative quantale.  
The arithmetic is summarized in the following tables lifted from the Bachelor's Thesis of Fujii~\cite{Fujii:BThesis}. 
\begin{center}
    \newcolumntype{A}{>{$}c<{$}}
    \begin{tabular}{AA|AAA}
        &&&y&\\
        & x+y &-\infty&t&+\infty\\
        \otoprule
        &-\infty & -\infty &-\infty & +\infty\\
        x& s &-\infty & s+t & +\infty \\
        & +\infty & +\infty & +\infty & +\infty 
\end{tabular}
\qquad
    \begin{tabular}{AA|AAA}
        &&&y&\\
        &y-x&-\infty&t&+\infty\\
        \otoprule
        &-\infty & -\infty &+\infty & +\infty\\
        x&s &-\infty & t-s & +\infty \\
        &+\infty & -\infty & -\infty & -\infty 
    \end{tabular}
\end{center}
It is important to observe that there are a few subtleties in these definitions, so they don't behave quite how you might naively expect.  For instance, subtracting $+\infty$ is not the same as adding $-\infty$:
  \[(+\infty)+(-\infty)= (+\infty);\qquad (+\infty)-(+\infty)= (-\infty).\]
A similar structure, but with the opposite order was considered by Lawvere in~\cite{Lawvere:Entropy};  this structure was considered from the idempotent semiring perspective in~\cite{CohenGaubertQuadrat:DualityAndSeparation}.  Whilst this arithmetic of the infinite might seem a little strange, it is the appropriate structure for the Legendre-Fenchel transform.

As an idempotent semiring, $\Rbar$ has $+$ as its multiplication and $\min$ as its addition.
\subsection{Enriched structures for $\Truth$ and $\Rbar$}
We will go through a lot of the theory of categories enriched over our two main examples.  The standard references for much of the general theory of enriched categories are \cite{Kelly:EnrichedCategoryTheory}
and \cite{Borceux:Handbook2}.

\subsubsection{Categories}
\label{Sec:EnrichedCategories}
Recall that an ordinary small category $\C$ can be defined to consist of a set of objects $\ob(\C)$ such that
\begin{itemize}
 \item for every $c,c'\in \C$ there is a set $\C(c,c')\in \ob(\Set)$ called the hom-set;
 \item for every $c,c',c''\in \C$ there is a function $\C(c,c')\times \C(c',c'')\to \C(c,c'')$, this is called composition;
 \item for  every $c\in \ob\C$ there is a function $\{\pt\}\to \C(c,c)$, the image of this is called the identity on $c$.
\end{itemize}
This structure is required to satisfy unit and associativity conditions.

Given a monoidal category $\V$ we can define a \define{category enriched over $\V$} or a \define{$\V$-category} or just an \define{enriched category}, $\C$, to consist of a set $\ob\C$ such that
\begin{itemize}
 \item for every $c,c'\in \ob\C$ there is a specified object in $\V$, the hom-object, $\C(c,c')\in \ob(\V)$;
 \item for every $c,c',c''\in \ob\C$ there is a specified morphism in $\V$, compostion, $\C(c,c')\otimes \C(c',c'')\to \C(c,c'')$;
 \item for  every $c\in \ob\C$ there is a specified morphism in $\V$, the identity, $\{\pt\}\to \C(c,c)$.
\end{itemize}
This structure is required to satisfy unit and associativity conditions, but we won't need to know what these are.

When $\V$ is a thin monoidal category things simplify.  If we think of the object of $\V$ as a monoidal poset $(Q,\relation)$, then a \define{$\V$-category} is a set $C$ with the following data
\begin{itemize}
 \item for every $c,c'\in C$ there is a specified element  $\C(c,c')\in Q$;
 \item for every $c,c',c''\in C$ the relation $\C(c,c')\otimes \C(c',c'')\relation \C(c,c'')$ holds;
 \item for  every $c\in  C$ the relation $\one\relation \C(c,c)$ holds.
\end{itemize}
The unit and associativity conditions are automatically satisfied which is why we didn't need to know what they are.

Specializing to the two cases of interest, we find that a \define{$\Truth$-enriched category} is a set $R$ which for each pair of elements $r_1,r_2\in R$ we have a truth value $R(r_1,r_2)$ satisfying composition and identity conditions.  We interpret $R(r_1,r_2)$ as the truth value of a relation between between $r_1$ and $r_2$ and if this is true we write $r_1\Rrelation r_2$.  The composition condition means precisely that this relation is transitive and the identity relations means that the relation is reflexive.  If we use $\lefttruthval a\righttruthval$ to denote the truth value of $a$ then these become the following:
\[
    \lefttruthval r_1\Rrelation r_2\righttruthval \logand \lefttruthval r_2\Rrelation r_3\righttruthval 
        \entails \lefttruthval r_1\Rrelation r_3\righttruthval;
    \qquad 
    \true\entails\lefttruthval r_1\Rrelation r_1\righttruthval,\,\text{i.e.\ }r_1\Rrelation r_1.
\]
This relation, in general, will not be a partial order as it is not forced to satisfy antisymmetry, so it will just be a preorder.  

An \define{$\Rbar$-enriched category} is going to be a bit like a metric space with negative distances allowed.  It will consist of a set $X$ and for each $x,x'\in X$ we have a number $X(x,x')\in \Rbar$, but we will usually write this as $\dd(x,x')$ to emphasise that it is being thought of as like a distance.  This will have to satisfy the following two conditions.
\begin{itemize}
  \item for all $x,x'\in X$ we have $\dd(x,x')+\dd(x',x'')\ge \dd(x,x'')$;
  \item for all $x\in X$ we have $0\ge \dd(x,x)$.
\end{itemize}
The first condition is, of course, the triangle inequality.  The second condition looks slightly odd, but in conjunction with the triangle inequality it is straightforward to show that either $\dd(x,x)=0$, in which case we say that $x$ is a \define{finite} point, or else $\dd(x,x)=-\infty$ in which case we say that $x$ is an \define{infinite} point.  It is also straightforward to show that $x$ is an infinite point if and only if $\dd(x,x')=\pm \infty$ for all $x'\in X$.

 The first example of an $\Rbar$-space is $\Rbar$ itself in which  we have $\dd(x,x')=x'-x$, where for infinite values we mean the subtraction as defined in Section~\ref{sec:tables}.  Unsurprisingly, there are precisely  two infinite points: $-\infty$ and $+\infty$.

There is the notion of being isomorphic in a $\V$-category $\C$.  If $c,c'\in\Ob\C$ with $\one\relation \C(c,c')$ and $\one \relation \C(c',c)$ then we say that $c$ and $c'$ are \define{isomorphic} and write $c\isomorphic c'$.  Isomorphic objects cannot be distinguished in the category.  A $\V$-category in which any pair of isomorphic objects are actually equal is called a \define{skeletal} $\V$-category.

In the $\Truth$ case, as mentioned above, $r\isomorphic r'$ if $r\Rrelation r'$ and $r' \Rrelation r$, so a skeletal $\Truth$-category is precisely a poset.

In the $\Rbar$ case, if $x\isomorphic x'$ then precisely one of the following two things hold: either $\dd(x,x')=0=\dd(x',x)$ and both objects are finite, or else $\dd(x,x')=-\infty=\dd(x',x)$ and both objects are infinite.

We can now obtain a dictionary for translating from certain notions in category theory to notions in order theory and $\Rbar$-space theory.
Part of this dictionary is summarized in Table~\ref{table:truthdictionary}.  Here we give more detail.

\begin{sidewaystable}
\centering
\begin{tabular}{lll}
\toprule
category& preorder & $\Rbar$-space\\\otoprule
functor & order preserving map&distance non-increasing map\\\midrule
set of natural transformations  & domination relation & asymmetric $\sup$ metric on functions\\\midrule
internal hom object in Set & logical implication in Truth&asymmetric distance in $\Rbar$\\\midrule
presheaf & downward closed subset (downset) & short $\Rbar$-valued function\\\midrule
copresheaf & upward closed subset (upset) & short $\Rbar^{\op}$-valued function\\\midrule
category of presheaves & downsets ordered by inclusion & short $\Rbar$-functions with\\
&&\qquad maximal climb distance\\\midrule
category of opcopresheaves & upsets ordered by containment &short $\Rbar^{\op}$-functions with \\
&&\qquad maximal descent distance\\\midrule
category of presheaves& powerset of a set& $\Rbar$-functions with \\
\qquad on a set&\qquad ordered by inclusion&\qquad maximal climb distance\\\midrule
category of opcopresheaves& powerset of a set& $\Rbar^{\op}$-functions with\\
\qquad on a set&\qquad ordered by containment&\qquad maximal descent distance\\\midrule
adjunction & Galois connection&$\Rbar$-adjunction\\\midrule
profunctor & relation& $\Rbar$-valued pairing\\\midrule
nucleus of a profunctor& Galois correspondence from a relation&???\\\bottomrule
\end{tabular}
\caption{Translating from  categories to preorders and $\Rbar$-spaces}
\label{table:truthdictionary}
\end{sidewaystable}

\subsubsection{Functors}

There is a notion of \define{$\V$-functor} $F\colon \C\to\D$ between $\V$-categories.  This consists of a function $F\colon \ob \C\to \ob\D$ together with, for each pair of objects $c,c'\in \ob\C$, a $\V$-morphism $\C(c,c')\to \D(F(c),F(c'))$, such that these satisfy functoriality, but when $\V$ is thin this is automatically satisfied.

In the $\Truth$ setting a $\Truth$-functor is a function $F\colon R\to S$ such that,  
using square brackets to mean \scare{the truth value of} this is the same as \[\lefttruthval r\Rrelation r'\righttruthval\entails \lefttruthval F(r)\Srelation F(r')\righttruthval,\] or, in other words, $F$ preserves the order.

In the $\Rbar$ setting, an $\Rbar$-functor is a function $F\colon X\to Y$ such that 
 \[\dd(x,x')\le \dd(F(x),F(x'))\]
so an $\Rbar$-functor is a distance non-increasing map, also known as a \define{short map}.

Two functors $F\colon \C\rightleftarrows \D\colon G$ form an \define{equivalence} if $FG(d)\isomorphic d$ and $GF(c)\isomorphic c$ for all $c\in \C$ and $d\in \D$.  This is the right notion of sameness for categories.  If $\C$ and $\D$ are both skeletal then an equivalence between them is necessarily an isomorphism.

\subsubsection{Natural transformation objects}
In ordinary category theory, for a pair of parallel functors $F,G\colon \C\to \D$ there is a set of natural transformations between them.  This is sometimes written $\Nat(F,G)$ but category theorists often write it as $[\C,\D](F,G)$ because for such categories $\C$ and $\D$, it makes the collection of functors and natural transformations into a category, the functor category, written $\Fun(\C,\D)$ or $[\C,\D]$.  In the \emph{enriched} setting, providing that $\V$ is sufficiently nice, meaning complete, which we have in our cases, then for a pair of parallel functors $F,G\colon \C\to \D$ there is an object of $\V$, written $[\C,\D](F,G)$, known as the \define{$\V$-object of natural transformations}, and, again, for given $\V$-categories $\C$ and $\D$ we get a $\V$-category, the \define{functor $\V$-category} $[\C,\D]$, formed from the $\V$-functors from $\C$ to $\D$.  In general, the natural transformation object is given by a certain kind of limit known as an enriched end and is written as
$\int_c \D(F(c),G(c))$.
We  won't need ends in general as in the case of a thin $\V$ this is just a meet   \[[\C,\D](F,G)=\bigwedgie_{c\in \C} \D(F(c),G(c)).\]

In the context of enriching over $\Truth$, this becomes a big \scare{for all}%
\footnote{You might think of the meet as being \scare{and} but in the quantale perspective of generalizing truth values, it is the tensor product which generalizes logical \scare{and} whilst the meet generalizes \scare{for all}.}.
%One way to get an end is when we have sufficiently many limits and then it is the equaliser of two maps in $\V$ which I won't specify:
%  \[\int_c \D(F(c),G(c))\to \prod \D(F(c),G(c))\rightrightarrows \prod[\C(c,c'),\D(F(c),G(c,c')].\]
%In a thin category such as $\Truth$ any two parallel arrows are equal so an equalizer is equal to the source of the arrows.  Moreover, the product in $\Truth$ is precisely $\&$, so the him-object has the following form:
  \[[R,S](F,G)=\bigforall\nolimits_r \D(F(r),G(r)). \]
Rewriting this is terms of truth values of the relations we get that this means
  \[\lefttruthval F\RtoSrelation G\righttruthval \coloneqq \lefttruthval\forall r (F(r) \Srelation G(r))\righttruthval.\]
In other words, given preorders $\C$ and $\D$, there is a canonical preorder on the set of order preserving functions $\C\to\D$, and this is $F\RtoSrelation G$ if and only if $F(r)\Srelation G(r)$ for all $r$.  We can say that $F$ is \define{dominated} by $G$ if $F\RtoSrelation G$.  

In the context of enriching over $\Rbar$, the end is just a supremum, so the set $[X,Y]$ of all distance non-increasing maps $X\to Y$ is equipped with the natural $\Rbar$-metric
 \[\dd(F,G)\coloneqq\sup_c \{ \dd\left( F(c),G(c)\right)\}.\]
Of course, this is very similar to the sup metric on the function spaces between ordinary metric spaces.

\subsubsection{The underlying preorder of a $\V$-enriched category}
\label{sec:UnderlyingPreorder}
Associated to every enriched category $\C$ is an `underlying' ordinary category $\underlying{\C}$ which has the same objects as $\C$ but the hom sets are defined as follows: $\underlying{\C}(c,c'):=\V(\one,\C(c,c'))$.  This gives a \emph{set} because $\V$ is an ordinary category.  In the case that $\V$ is a thin category then the underlying category of any $\V$-category will also be thin.  To put it another way, if $\V$ is a preorder then $\V$-categories have underlying preorders.  For $\C$ a $\V$-category, the \define{underlying preorder} is seen, by unpacking the definition to be given by
  \[c\relation_{{\C}}c'\quad \iff\quad \one \relation \C(c,c').\]
It follows easily that the underlying preorder of $\C$ is a partial order precisely when $\C$ is skeletal.

In the case of $\Truth$, the underlying preorder of a $\Truth$-category is, rather unsurprisingly, the usual preorder that it is identified with.

In the case of an $\Rbar$-space it makes sense to write the preorder $\relation_{X}$ as $\curlyge$.  We have
    \[x\curlyge x'\quad \iff\quad 0\ge \dd(x,x').\]

\subsubsection{The category of $\V$-categories is closed monoidal}
\label{Section:VcatClosedMonoidal}
The ordinary category of $\V$-categories and $\V$-functors has the structure of a closed monoidal category.  This means that there is a tensor product of $\V$-categories and this has, in an appropriate sense, a right adjoint, which is actually the functor $\V$-category defined above.

If $\C$ and $\D$  are $\V$-categories then their \define{tensor product}  $\C\otimes \D$ has as its set of objects the set of ordered pairs $\ob \C \times \ob \D$ and the hom-$\V$-objects are given by 
  \[\C\otimes \D ((c,d),(c',d')):=\C(c,c')\otimes \D(d,d').\]

When enriching over truth values, this means that for preorders $R$ and $S$ the preorder on $R\times S$ is given by 
  \[(r,s)\RSrelation (r',s')\quad\text{ if and only if }\quad r\Rrelation r' \,\,\text{and}\,\, s\Srelation s'.\]

When enriching over the extended real numbers, this means that for $\Rbar$-spaces $X$ and $Y$ the $\Rbar$-metric on $X\times Y$ is given by
  \[\dd ((x,y),(x',y')):=\dd(x,x')+ \dd(y,y').\]
  
As described above we also have the functor $\V$-category $[\C,\D]$.  The notation is the same as for residuation because this is also adjoint to the tensor product in the sense that
  \[\Vcat(\C\otimes \D,\E)\isomorphic \Vcat(\C,[\D,\E])\]
so every $\V$-functor $\C\otimes\D\to \E$ corresponds to a unique $\V$-functor $\C\to [\D,\E]$.
%\subsubsection{The closed structure}
%If the enriching category is closed then it can be considered as a category enriched over itself.  There is a small problem here with type checking and we should be more precise.  If $\V$ is closed then there is an internal hom functor $[{-},{-}]\colon \V^\op\times \V\to \V$.  We form a $\V$-category $\overline{\V}$ with the same objects as $\V$ but with the hom-object given by the internal hom $\overline{\V}(v,w):=[v,w]$.  People often use the same notation for $\V$ and $\overline{\V}$; this is potentially confusing, but so you should do this with some care.
%
%The category of truth values, $\Truth$, is closed and the internal hom is given by \scare{logical implication}: $[a,b]:=a\imp b$.  Here implication is understood in the logical operation sense and not the deductive sense (that's what we are using entails for).  So, using brackets purely for clarity,
% \begin{align*}
% (\true \imp \true) &=\true;& 
%   (\true\imp \false)&=\false;\\
%(\false\imp \true)&=\true;&
%(\false\imp \false)&=\true.
%\end{align*}
%
%\slogan{Internal hom for $\Truth$ is implication.}
%
%We know that $\Truth$-categories are the same as preorders, so this gives a natural preorder on the set $\{\true,\false\}$.
%
%\slogan{The $\Truth$-category structure on $\Truth$ corresponds to the preorder $\false\le \true$.}

If $\C$ and $\D$  are $\V$-categories then there is also their \define{cartesian product}  $\C\times \D$ which also has as its set of objects the set of ordered pairs $\ob \C \times \ob \D$ but the hom-$\V$-objects are given by using the meet in $\V$ rather than the tensor product:
  \[\C\times \D ((c,d),(c',d')):=\C(c,c')\wedge \D(d,d').\]
For $\Truth$-categories this is just the same as the tensor product, but for $\Rbar$-categories this is  an $L^{\infty}$-like product of spaces rather than the $L^{1}$-like tensor product. 

\subsubsection{Completeness and cocompleteness}
\label{sec:completeness}
  In ordinary category theory there are notions of limit and colimit which lead to the notions of completeness and cocompleteness for a category.  In enriched category theory there are notions of \emph{weighted} limit and colimit which lead to notions of completeness and cocompleteness for an enriched category.  We won't need the full theory as things simplify a lot when enriching over preorders, and we can characterize completeness and cocompleteness is a more direct way.  This is even simpler when the category under consideration is skeletal.

We say that a skeletal $\V$-category $\C$ \define{has products} if we have a function $\bigproduct\colon \powerset (\ob \C)\to \ob \C$ on the set of subsets of objects which satisfies
 \[\C\Bigl(c,\bigproduct_{c'\in S}c'\Bigr)=\bigwedgie_{c'\in S}\C(c,c')\qquad \text{for all }c\in \ob\C,\ S\subseteq\ob\C.\]
Such a function, if it exists, is necessarily unique as we assumed that $\C$ is skeletal.  This is because if we have a different function $\overline\bigproduct\colon \powerset (\ob \C)\to \ob \C$ with the same property, then we can use the identity on $\bigproduct_{c'\in S}c'$ to form the following composite:
  \[\one\to
  \C(\bigproduct_{c\in S}c,\bigproduct_{c'\in S}c')=
  \bigwedgie_{c'\in S}\C(\bigproduct_{c\in S}c,c')=
  \C(\bigproduct_{c\in S}c,\overline\bigproduct_{c'\in S}c').
  \]
By symmetry we also have a morphism  $\one\to
  \C(\overline\bigproduct_{c'\in S}c',\bigproduct_{c\in S}c).
$, thus $\overline\bigproduct_{c'\in S}c'$ and $\bigproduct_{c\in S}c$ are isomorphic, hence equal.

From the properties of the meet $\bigwedgie$ in $\V$ it follows that this gives the set of objects $\ob\C$ the structure of a commutative idempotent monoid in which you can take the product of infinite sets of elements.  The preorder associated to this idempotent structure is precisely the underlying preorder of $\C$.

We say that a skeletal $\V$-category $\C$ is \define{cotensored} if there is a function $\cotensor\colon\ob\V\times\ob\C\to \ob\C$ (pronounced ``pitchfork'' or ``cotensor'') such that 
 \[\C\bigl(c,v\pitchfork c'\bigr)=\bigl[v,\C(c,c')\bigr]\qquad \text{for all }c,c'\in \ob\C,\ v\in\ob\V.\]
If $\C$ has products then a simple calculation shows that being cotensored makes the monoid $(\ob\C,\bigproduct)$ into an $\V$-module.  Again, if the cotensor exists then it is necessarily unique because of the skeletal assumption on $\C$.

We say that a skeletal $\V$-category $\C$ is \define{complete} if it has products and is cotensored.
The poset $\V$ is itself complete because the product is given by the meet and the cotensor is given by residuation or internal hom: $v\pitchfork v' =[v,v']$.

In the case of skeletal $\Truth$-categories, i.e.~posets, categorical completeness is just the same as usual completeness in that the product is just the meet and the cotensor is given by the following: $\true\cotensor a=a$  and $\false\cotensor a=\bigwedgie_{\emptyset}$.

In the case of skeletal $\Rbar$-categories, by definition the product and cotensor satisfy 
  \[\dd\Bigr(a, \bigproduct_{x\in S} x\Bigl) =\sup_{x\in S}\dd(a,x)\qquad\text{and}\qquad\dd(a, r\cotensor x)= \dd(a,x)-r,\]
so, in particular, cotensoring with a positive number results in an object closer from every other point.

This definition of completeness is equivalent to the usual one in terms of existence of all weighted limits, this can be seen by applying the reasoning in Section~5 of~\cite{Willerton:Isbell} or using Theorem~6.6.14 of~\cite{Borceux:Handbook2}

A \define{continuous} functor between complete $\V$-categories is one which commutes with products and cotensors.

It is maybe worth noting that the hom objects in a complete $\V$-category are determined by the product and cotensor structure (see Section~2.2 of~\cite{CohenGaubertQuadrat:DualityAndSeparation}):
  \[\C(c,c')=\biggyvee \{v\in V \mid c\relation (v\cotensor c')\}.\]

In an analogous fashion, there are notions of coproducts, tensoring, cocontinuity and cocompleteness.
We say that a skeletal $\V$-category $\C$ \define{has coproducts} if we have a function $\bigcoproduct\colon \powerset (\ob \C)\to \ob \C$ on the set of subsets of objects which satisfies
 \[\C\Bigl(\bigcoproduct_{c'\in S}c',c\Bigr)=\bigwedgie_{c'\in S}\C(c',c)\qquad \text{for all }c\in \ob\C,\ S\subseteq\ob\C.\]
Again, such a function, if it exists, is necessarily unique.
And also it follows that this gives the set of objects $\ob\C$ the structure of a commutative idempotent monoid in which you can take the product of infinite sets of elements.

We say that a skeletal $\V$-category $\C$ is \define{tensored} if there is a function $\ctensor\colon\ob\V\times\ob\C\to \ob\C$  such that 
 \[\C\bigl(v\ctensor c',c\bigr)=\bigl[v,\C(c',c)\bigr]\qquad \text{for all }c,c'\in \ob\C,\ v\in\ob\V.\]
Again, if $\C$ has coproducts and then the tensoring makes the monoid $(\ob\C,\bigcoproduct)$ into a  $\V$-module.

In the obvious way, we say that a skeletal $\V$-category $\C$ is \define{cocomplete} if it has coproducts and is tensored.

\subsubsection{Presheaves and copresheaves}
A particularly important role is played in ordinary category theory by so-called presheaves and copresheaves; these are contravariant and covariant functors taking values in the category of sets and are analogues of scalar-valued functions on vector spaces.  We have an appropriate generalization in enriched category thoery provided that $\V$ is closed, symmetric monoidal, as it is in our cases, as then $\V$ can itself be considered as a $\V$-category.  A \define{presheaf} on a $\V$-category $\C$ is a functor $\C^\op\to \V$ and a \define{copresheaf} is a functor $\C\to \V$.  We can then form the functor $\V$-categories, so $\pre{\C}$ is the \define{presheaf} category $[\C^\op,\V]$ and $\copre{\C}$ is the \define{opcopresheaf category} $[\C,\V]^\op$.

In the $\Truth$ case we can identify these with more familiar concepts.  If $R$ is a $\Truth$-category thought of as a preorder then a presheaf is an order reversing function $P\colon R^\op\to \Truth$  and can be identified with the set $\tilde{P}\coleqq P^{-1}(\true)\subset R$.  The fact that $P$ is order reversing corresponds precisely to the property that $\tilde{P}$ is downward closed, so if $r\in \tilde{P}$ and $r'\Rrelation r$ then $r'\in\tilde{P}$.  Thus the set of presheaves on $R$ can be identified with the set of downward closed subsets of $R$.   The domination relation on presheaves then becomes the subset ordering on subsets of $R$:
  \begin{align*}
    P_1\le P_2&\iff \forall r. \left(P_1(r)\entails P_2(r)\right) \iff\forall r \left( r\in \tilde{P_1}\implies  r\in \tilde{P_2}\right)\\
    &\iff \tilde{P_1}\subset \tilde{P_2}.
  \end{align*}
%  A \define{presheaf} on $\C$ is a $\V$-functor $P\colon\C^\op\to \V$ and  a \define{copresheaf} is a $\V$-functor $Q\colon\C\to \V$.  In the context of preorders and truth values, a presheaf is an order-\emph{reversing} function $P\colon\C\to \{\true,\false\}$ and a copresheaf is an order-\emph{preserving} function $Q\colon\C\to \{\true,\false\}$.  
  
% Knowing a function to $\{\true,\false\}$ is the same as knowing the preimage of $\true$, so we can identify a function $P\colon \C\to \{\true,\false\}$ with the subset $\tilde{P}=P^{-1}(\true)\subseteq \C$.  The order reversing condition
%  \[\text{if }\, c\le c' \,\text{ then } \,P(c')\le P(c)\]
%translates into 
% \[\text{if }\, c\le c'\,\text{ and }c'\in \tilde{P}\,\text{ then }\, c\in \tilde{P}.\] 
%So $\tilde{P}$ is a \define{downward closed subset} of the preorder $\C$ and all downward closed subsets arise in this way.
%
%Similarly if $Q\colon \C\to \Truth$ is a copresheaf then it corresponds to an \define{upward closed subset} $\tilde{Q}=Q^{-1}(\true)$.

%Because presheaves are $\Truth$-functors, we canonically have the domination relation between them.  Given presheaves $P,P'\colon \C^\op\to\Truth$ then 
%\[P\le P'\quad\text{if and only if}\quad P(c)\le P'(c)\text{\,\,for all\,\,}c\in \C,\]
%which when translated to downward closed sets becomes
%\[P\le P'\quad\text{if and only if}\quad \tilde P\subseteq \tilde{P'}.\]
%
%\slogan{The poset of presheaves on a preorder can be identified with the set of downward closed subsets equipped with the subset ordering.}

Similarly, the domination relation on copresheaves corresponds to inclusion of the associated upward closed subsets.  However, it is usually the \emph{opposite} of the category of copresheaves that crops up, so this has the opposite relation.

%\slogan{The poset of opcopresheaves on a preorder can be identified with the set of upward closed subsets equipped with the superset ordering.}

Sets can be thought of as discrete posets, that is to say, where $r\le r'$ if and only if $r=r'$.  In that case all subsets are both upward closed and downward closed, thus the set of copresheaves and the set of presheaves can both be identified with the powerset of the original set.

In the case of $\Rbar$-spaces, a presheaf is simply a reverse-distance non-increasing function to $\Rbar$ so it satisfies
  \[\dd(x,x')\ge P(x)-P(x').\]
The $\Rbar$-distance on the set of such functions is the \scare{maximal climb} distance:
 \[\dd(P,P')=\sup_{x\in X}\left(P'(x)-P(x)\right).\]
The underlying partial order (see Section~\ref{sec:UnderlyingPreorder}) is then easily seen to be a domination relation:
 \[P\curlyge P'\quad\iff\quad P(x)\ge P'(x) \text{ for all }x\in X.\]

Similarly a copresheaf is a distance non-increasing function to $\Rbar$ so it satisfies
  \[\dd(x,x')\ge Q(x')-Q(x).\]
The $\Rbar$-distance on the set of opcopresheaves is the \scare{maximal fall} distance:
 \[\dd(Q,Q')=\sup_{x\in X}\left(Q(x)-Q'(x)\right).\]
 
\subsubsection{Presheaves and opcopresheaves are complete and cocomplete} 
\label{Section:PresheavesComplete}
The $\V$-category of presheaves, $\pre{\C}$, on a $\V$-category $\C$ is both complete and cocomplete, this is because $\V$ is assumed to be complete and cocomplete so the limits and colimits are defined pointwise:
 \begin{align*}
 \bigl(\bigproduct_{j\in J}P_j\bigr)(c)&=\bigwedgie_{j\in J} P_j(c);
 &
  \bigl(\bigcoproduct_{j\in J}P_j\bigr)(c)&=\biggyvee_{j\in J} P_j(c);
 \\
 (v\cotensor P)(c)&=v\cotensor(P(c));
 &
  (v\odot P)(c)& =v\odot (P(c)).
 \end{align*}

The category of presheaves $\pre{C}$ has a universal property for maps from $\C$ to cocomplete $\V$-categories.  Firstly, we need to observe that there is the Yoneda map $\Yoneda\colon\C\to \pre{\C}$ given by $c\mapsto \C({-},c)$.  Then, given a functor $F\colon\C\to \D$ with $\D$ cocomplete (so in particular skeletal) there is a unique cocontinuous functor $\pre{F}\colon\pre{\C}\to \D$ which factorizes $F$ through the Yoneda map.  For general $\V$-categories this functor has a description as a coend, but in the case here of enriching over a poset it can be written as a coproduct of tensors:
  \[\pre{F}(P)= \bigcoproduct_{c\in \C}P(c)\odot F(c).\]  
Because of this universal property, the category of presheaves is known as the \define{free cocompletion} of $\C$; from the $\V$-module perspective, we can think of it as the free cometric $\V$-module on $\C$.  If we have a function $f\colon C\to D$ from a set $C$ to a module $D$ over a ring $V$ then this extends to a $V$-module map from the $V$-module of $V$-valued functions on the set by $\pre{f}(p)=\sum_c p(c)f(c)$.

In the $\Truth$ case, for a poset $R$, thinking of presheaves as downsets, the Yoneda embedding $R\to \pre{R}$ sends an element to its descending set $r\mapsto \{r'\mid r'\le_{R}r\}$.  Then given an order preserving map $f\colon R\to S$ to a complete poset $S$ the unique cocontinuous extension $\pre{f}\colon \pre{R}\to S$ is given by $\pre{f}(D)=\biggyvee_{r\in D}f(r)$.

In the $\Rbar$ case, the Yoneda embedding $X\to \pre{X}$ associates to a point, the distance to the point $x\mapsto \dd({-},x)$.   
% Then for a short map $f\colon X\to Y$ with $Y$ cocomplete, the unique cocontinuous extension  $\pre{f}\colon \pre{X}\to Y$ is given by $\pre{f}(P)=\inf_{x\in X}P(x)+f(x)$.

Similarly, the $\V$-category $\opcopre{\C}=[\C,\V]^{\op}$ of opcopresheaves is complete and cocomplete, with the limits and colimits defined pointwise as follows:
 \begin{align*}
 \bigl(\bigproduct_{j\in J}Q_j\bigr)(c)&=\biggyvee_{j\in J} Q_j(c);
 &
  \bigl(\bigcoproduct_{j\in J}Q_j\bigr)(c)&=\bigwedgie_{j\in J} Q_j(c);
 \\
 (v\cotensor Q)(c)&=v\otimes(Q(c));
 &
  (v\odot Q)(c)& =[v, Q(c)].
 \end{align*}

 The category of opcopresheaves $\pre{C}$ has a universal property for maps from $\C$ to \emph{complete} $\V$-categories.  This time we have the co-Yoneda map $\C\to \opcopre{\C}$ given by $c\mapsto \C(c,{-})$.  Then, given a functor $F\colon\C\to \E$ with $\E$ cocomplete (so in particular skeletal) there is a unique continuous functor $\opcopre{F}\colon\pre{\C}\to \D$ which factorizes $F$ through the Yoneda map.  It can be written as a product of cotensors:
  \[\pre{F}(P)= \bigproduct_{c\in \C}P(c)\cotensor F(c).\]
Because of this universal property, the category of opcopresheaves is known as the \define{free completion} of $\C$.

\subsubsection{Adjunctions}
A fundamental concept in category theory is that of an adjunction and a basic slogan of category theory is ``Adjunctions arise everywhere''~\cite[preface]{MacLane:CWM}.  Enriched adjunctions also arise in many places as we shall see.

An \define{enriched adjunction} consists of a pair of enriched functors
  \[F\colon \C \leftrightarrows \D\colon G\]
together with isomorphisms in $\V$, natural in $c\in \C$ and $d\in \D$
  \[\D(F(c),d)\cong \C(c,G(d)).\]
In our cases, $\V$ is skeletal so these isomorphisms will be equalities.

This means that when we enrich over the category of truth values we get a $\Truth$-adjunction being a pair of order-preserving maps between posets
  \[F\colon \C \leftrightarrows  \D\colon G\]
with the condition that 
  \[F(c)\le d \,\text{ if and only if }\, c\le G(d).\]
In other words we have that a $\Truth$-adjunction is precisely a Galois connection between preorders.

In the $\Rbar$ case a adjunction consists of a pair of distance non-increasing maps
    \[F\colon X \leftrightarrows  Y\colon G\]
with the condition that 
  \[\dd\left(F(x), y \right) = \dd\left( x, G(y)\right).\]
Here is a simple non-trivial example.  Take  $\Rbar$ with its usual $\Rbar$-metric $\dd(y_1,y_2)\coloneqq y_2-y_1$ and  take the cartesian product $\Rbar\times\Rbar$, that is $\Rbar^2$ with the $\Rbar$-metric 
\[\dd((x_1,x_2),(x_1',x_2'))\coloneqq \max(x_1'-x_1,x_2'-x_2).\]  Define \[F\colon \Rbar^2\to \Rbar;\quad F(x_1,x_2)=\min(x_1,x_2);\qquad G\colon \Rbar\to \Rbar^2;\quad G(y)\coloneqq (y,y).\]
Then $F$ and $G$ form an adjunction.

% To prove that F is actually a short map you need
%   min(y_{1},y_{2}) - min(x_{1},x_{2}) = max( min(y_{1},y_{2}) - x_{1},
%                                                                     min(y_{1},y_{2}) - x_{2})
%                                   <= max( y_{1}-x_{1}, y_{2}-x_{2} )

\subsubsection{Closure operations and the invariant part of an adjunction}
Associated to an adjunction $F\colon \C \leftrightarrows \D\colon G$ is an equivalence between certain subcategories of $\C$ and $\D$, this is known as the invariant part of the adjunction.  In order to describe this, consider first the composites $GF\colon \C\to\C$ and $FG\colon \D\to\D$.  Because we are working over a poset these are both idempotent  in the sense that $FGFG(d)\isomorphic FG(d)$ and $GFGF(c)\isomorphic GF(c)$ for all $c\in \C$ and $d\in\D$.  To demonstrate the isomorphism (see Section~\ref{Sec:EnrichedCategories}) for $FG$ we first use the unit map and then the adjunction:
  \[\one\to \C(GFG(d),GFG(d))\to \C(FGFG(d),FG(d)). \]
Then in the other direction we use functoriality in the final step,
  \[\one \to \D(FG(d),FG(d))\to \C(G(d),GFG(d))\to \D(FG(d),FGFG(d)).\]
The isomorphism for $GF$ is similar.  The examples we will be considering are skeletal categories so all of the isomorphisms can be replaced with equalities.

We also have a map in $\V$
 \[\one\to \C(G(d),G(d))\isomorphic \D(FG(d),d)\]
so in the underlying preorder $FG$ is a decreasing map: $FG(d)\relation_{\D}d$.  Similarly $GF$ is increasing: $c\relation_{\C}GF(c)$.  Together with the idempotency, this leads us to describe $FG$ and $GF$ as \define{closure operators}.  The \define{closed objects} will be the elements of the fixed categories of these operators, where by `fixed' we mean fixed up to isomorphism:
\[
%\begin{aligned}
\Fix(GF)\coloneqq \{c\in \C\mid c\isomorphic GF(c)\};\qquad
\Fix(FG)\coloneqq \{d\in \D\mid FG(d)\isomorphic d\}%\\
%Z(F,G)&\coloneqq  \left\{\left(c\in\C,d\in\D,\alpha\colon c\xrightarrow{\sim} G(d),\beta\colon F(c)\xrightarrow{\sim}d\right)\bigm | \alpha \text{ and }\beta\text{ are adjoint}\right\}
%\end{aligned}
\]
By the idempotency, these fixed categories are equivalent to the image categories, i.e.~$\Fix(FG)\equivalent\Image(FG)$ and $\Fix(GF)\equivalent\Image(GF)$.  The adjunction restricts to an equivalence between the fixed sets,
  \[\Fix(GF)\equivalent\Fix(FG).\]
There is a third equivalent category which consists of pairs of objects which are sent to each other by the adjunction:
  \[\Inv(F,G)\coleqq \bigl\{(c,d)\mid F(c)\isomorphic d,\ c\isomorphic G(d)\bigr\}\subset \C\times \D.\]
The equivalences $\alpha\colon \Inv(F,G)\leftrightarrows\Fix(GF)\colon \beta$ are given by $\alpha(c,d)\coleqq c$ and $\beta(c)\coleqq  (c,F(c))$, and it works analogously for $\Fix(FG)$.
Any of these three equivalent $\V$-categories, or the  equivalence between the fixed sets, is known as the \define{invariant part} of the adjunction.  
If the categories are skeletal then they must be an isomorphic.

In the case of $\Truth$ the functors $FG$ and $GF$ are the usual closure operators for a Galois connection and the invariant part is precisely the associated Galois correspondence.

In the $\Rbar$-adjunction given above, the invariant part identifies $\Rbar$ with the diagonal in $\Rbar\times\Rbar$.

\subsubsection{The invariant part is complete and cocomplete}

For an adjunction $F\colon \C \leftrightarrows \D\colon G$, if $\C$ and $\D$ are complete and cocomplete then so is the invariant part of the adjunction, with the limits being calculated in $\C$ and the colimits in $\D$ in the following way.

\begin{thm}
\label{thm:InvariantPartComplete}
If $\C$ and $\D$ are complete and cocomplete $\V$-categories then so is the invariant part of any $\V$-adjunction $F\colon \C\leftrightarrows \D\colon G$, with the limits and colimits given as follows: 
\begin{align*}
  \bigproduct_{j\in J}(c_j,d_j)
  &=
  \biggl(\bigproduct_{j\in J} c_j,\ 
  FG \Bigl(\bigproduct_{j\in J} d_j\Bigr)\biggr);
  \\
  \bigcoproduct_{j\in J}(c_j,d_j)
  &=
  \biggl(GF\Bigl(\bigcoproduct_{j\in J} c_j\Bigr),\ 
    \bigcoproduct_{j\in J} d_j\biggr);
  \\
  v\cotensor (c,d)
  & =
  (v\cotensor c ,\  FG (v\odot d));
  \\
  v\odot(c,d) 
  &=
  (GF(v\odot c),\ v\odot d).
\end{align*} 
\end{thm}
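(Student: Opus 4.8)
The plan is to lean on the three equivalent descriptions of the invariant part just established. As everything here is skeletal, $\Inv(F,G)$, $\Fix(GF)$ and $\Fix(FG)$ are not merely equivalent but isomorphic. Whether one reads the $\V$-structure on $\Inv(F,G)$ off its inclusion into the cartesian product $\C\times\D$ or transports it from $\Fix(GF)$ along $\alpha,\beta$, the hom-objects come out as $\Inv(F,G)\bigl((c,d),(c',d')\bigr)=\C(c,c')$; substituting $d=F(c)$ and $c'=G(d')$ into the adjunction isomorphism shows this equals $\D(d,d')$ too, hence also equals their meet, so the two viewpoints agree. I would then, for each of the four claimed formulas, (i) check that the prescribed pair genuinely lies in $\Inv(F,G)$, and (ii) verify its defining weighted-(co)limit equation, reading the hom-objects of $\Inv(F,G)$ off the $\C$-side for the limits and off the $\D$-side for the colimits.

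Step (i) rests on two easy lemmas. First, $G$ preserves products and cotensors and $F$ preserves coproducts and tensors; each is a one-line comparison of the defining equations, for instance $\C(c,G(v\cotensor d))=\D(F(c),v\cotensor d)=[v,\D(F(c),d)]=[v,\C(c,G(d))]=\C(c,v\cotensor G(d))$, so $G(v\cotensor d)=v\cotensor G(d)$ by skeletality, and similarly in the other three cases. Second, $GFG=G$ and $FGF=F$, which falls out of the relations $c\relation_{\C}GF(c)$ and $FG(d)\relation_{\D}d$ in the same way the idempotency of $GF$ and $FG$ was obtained. Granting these, if $(c_j,d_j)\in\Inv(F,G)$ for $j\in J$ then $c_j=G(d_j)$, so $\bigproduct_{j}c_j=G\bigl(\bigproduct_{j}d_j\bigr)$, and hence $F\bigl(\bigproduct_{j}c_j\bigr)=FG\bigl(\bigproduct_{j}d_j\bigr)$ while $G\bigl(FG(\bigproduct_{j}d_j)\bigr)=GFG\bigl(\bigproduct_{j}d_j\bigr)=G\bigl(\bigproduct_{j}d_j\bigr)=\bigproduct_{j}c_j$; thus the product pair lies in $\Inv(F,G)$. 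The coproduct, tensor and cotensor pairs go through identically, using instead that $F$ preserves coproducts and tensors, that $G$ preserves cotensors, and that $d_j=F(c_j)$.

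Step (ii) is then formal. For the product, for any $(c,d)\in\Inv(F,G)$,
\[
\Inv(F,G)\Bigl((c,d),\bigl(\bigproduct_{j}c_j,\,FG(\bigproduct_{j}d_j)\bigr)\Bigr)
=\C\Bigl(c,\bigproduct_{j}c_j\Bigr)
=\bigwedgie_{j}\C(c,c_j)
=\bigwedgie_{j}\Inv(F,G)\bigl((c,d),(c_j,d_j)\bigr),
\]
which is exactly the defining property of the product in $\Inv(F,G)$; the cotensor is the same computation with $[v,\C(c,{-})]$ in place of $\bigwedgie_{j}\C(c,{-})$. Dually, reading homs off the $\D$-side, $\Inv(F,G)\bigl((GF(\bigcoproduct_{j}c_j),\bigcoproduct_{j}d_j),(c,d)\bigr)=\D\bigl(\bigcoproduct_{j}d_j,d\bigr)=\bigwedgie_{j}\D(d_j,d)=\bigwedgie_{j}\Inv(F,G)\bigl((c_j,d_j),(c,d)\bigr)$ and $\Inv(F,G)\bigl((GF(v\odot c),v\odot d),(c',d')\bigr)=\D(v\odot d,d')=[v,\D(d,d')]=[v,\Inv(F,G)((c,d),(c',d'))]$, which give the coproduct and the tensor.

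The only genuine work, and the only place anything can go wrong, is step (i): one must make sure the two hom-descriptions of $\Inv(F,G)$ really do agree, so that ``the product formed in $\C$'' and ``the product formed in $\D$ then hit with $FG$'' name the same object of $\Inv(F,G)$, and this is precisely what the preservation lemmas and the identities $GFG=G$, $FGF=F$ buy. Conceptually this is the familiar statement that $\Fix(GF)$ is a reflective sub-$\V$-category of $\C$ --- honest products and cotensors there, but coproducts and tensors corrected by $GF$ --- while $\Fix(FG)$ is coreflective in $\D$; one could run the argument through that machinery instead, but verifying the four universal properties directly inside $\Inv(F,G)$, as above, is the shortest route.
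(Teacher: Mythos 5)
Your proof is correct and is essentially the paper's argument in a mildly different packaging: where the paper shows directly that the product in $\C$ of objects of $\Fix(GF)$ is again $GF$-fixed and then transports along the equivalence $\beta$ (using that $G$ preserves products in the final step), you establish membership in $\Inv(F,G)$ via the preservation lemmas together with $GFG=G$, $FGF=F$, and then verify the universal properties directly inside $\Inv(F,G)$ using the agreement of the two hom-descriptions --- the same formal ingredients, organised so that all four formulas are handled uniformly. One small remark: your cotensor argument (invoking that $G$ preserves cotensors) actually produces the pair $(v\cotensor c,\ FG(v\cotensor d))$ rather than the $(v\cotensor c,\ FG(v\odot d))$ printed in the statement; the $\odot$ there appears to be a typo, since the paper's own transport computation likewise yields the cotensor of $\D$ in the second slot, so your proof proves the corrected formula and no gap results.
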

\begin{proof}
  We will prove the formula for products and work initially in $\Fix(GF)\in \C$.  As $\C$ is complete every subset $\{c_{j}\}\subset \Fix(GF)$ has a product $\bigproduct_{j} c_{j}\in \C$, however we will show that the product actually lies in $\Fix(GF)$, i.e.~$GF(\bigproduct_{j} c_{j})=\bigproduct_{j} c_{j}$, for the latter it suffices to show that there are morphisms $\one \to \C(\bigproduct c_{j}, GF(\bigproduct c_{j}))$ and $\one \to \C(GF(\bigproduct c_{j}), \bigproduct c_{j})$.
  
  The first of these morphisms can be obtained as the composite of the unit and the adjunction:
\[
  \one
  \to 
  \C\Bigl(F(\bigproduct c_{j}),F(\bigproduct c_{j})\Bigr)
  \isomorphic
  \C\Bigl(\bigproduct c_{j},GF(\bigproduct c_{j})\Bigr).
\]
For the second morphism we use the unit morphism with the definition of the product in $\C$ and the fact that $GF(c_{i})=c_{i}$:
\begin{align*}
  \one
  &\to
  \C\Bigl(\bigproduct c_{j},\bigproduct c_{i}\Bigr)
  =
  \bigwedgie_{i} \C\Bigl( \bigproduct c_{j},c_{i}\Bigr)
  \to
  \bigwedgie_{i} \C\Bigl( GF\bigl(\bigproduct c_{j}\bigr),GF(c_{i})\Bigr)\\
  &=
  \bigwedgie_{i} \C\Bigl( GF\bigl(\bigproduct c_{j}\bigr),c_{i}\Bigr)
  =
  \C\Bigl( GF\bigl(\bigproduct c_{j}\bigr),\bigproduct c_{i}\Bigr).
\end{align*}
These two morphisms mean that the two objects are isomorphic, $GF\bigl(\bigproduct c_{j}\bigr)\isomorphic \bigproduct c_{j}$, so by the skeletal condition on complete categories, they are actually equal, thus $\bigproduct c_{j}\in \Fix(GF)$ and it is the product in $\Fix(GF)$.

As $\Fix(GF)$ is equivalent to $\Inv(F,G)$, the products corresponds under the equivalence.   Writing, for clarity, $\bigproduct^{\C}$ to mean the product in the category $\C$, and recalling from above the equivalences $\alpha\colon \Inv(F,G)\leftrightarrows\Fix(GF)\colon \beta$, we find
 \begin{align*}\textstyle
 \bigproduct^{\Inv(F,G)} (c_{j},d_{j})
 &=
 \beta\bigl(\bigproduct^{\Fix(GF)} \alpha(c_{j},d_{j})\bigr)
 =
  \beta\bigl(\bigproduct\nolimits^{\Fix(GF)} c_{j}\bigr)
  =
  \beta\bigl(\bigproduct\nolimits^{\C} c_{j}\bigr)
  \\
  &=
 % \beta\bigl(\bigproduct\nolimits^{\C} G(d_{j})\bigr)
  \Bigl(\bigproduct^{\C} c_{j},F \bigl(\bigproduct\nolimits^{\C} c_{j}\bigr)\Bigr)
 =
  \Bigl(\bigproduct^{\C} c_{j},F \bigl(\bigproduct\nolimits^{\C} G(d_{j})\bigr)\Bigr)\\
 &=
  \Bigl(\bigproduct^{\C} c_{j},FG \bigl(\bigproduct\nolimits^{\D} d_{j}\bigr)\Bigr).
\end{align*}
This is the required formula for the product.  The proofs of the other formulas is analagous.
\end{proof}
We can rephrase the theorem in another way: to take a limit --- i.e.~product or cotensor --- in the subcategory of closed objects in $\C$ you just take the limit in $\C$; whereas to take a colimit --- i.e.~coproduct or a tensor --- in the subcategory of closed objects  you take the \emph{closure} of the colimit in $\C$.

The category theoretic reader might note that the theorem is saying, amongst other things, that the inclusion $\Fix(GF)\to \C$ creates limits.  As $GF$ is idempotent, $\Fix(GF)$ is the Kleisli category $\C^{GF}$ of the monad $GF$ and the inclusion map $\Fix(GF)\to \C$ is actually the forgetful functor $\C^{GF}\to \C$, which in usual category theory is known to create limits.

A careful reader might note that for the invariant part to be complete and cocomplete it suffices that $\C$ is complete and $\D$ is cocomplete.  In that case the formulas in the theorem need changing slightly; for instance, we need to write $F \bigl(\bigproduct_{j\in J} G(d_j)\bigr)$ instead of $FG \bigl(\bigproduct_{j\in J} d_j\bigr)$, as the product $\bigproduct_{j\in J} d_j$ doesn't necessarily exist.

\subsubsection{Profunctors}
A \define{profunctor} between two $\V$-categories $\C$ and $D$ is  a $\V$-functor 
  \[\I \colon \C^\op\otimes \D\to \V.\]    
Whilst we won't need it here, it is worth saying a little more about profunctors, which are also called distributors, bimodules and matrices.  Conventions differ amongst authors, but I would consider such a profunctor to go from $\C$ to $\D$ and write $\I\colon \C\prof \D$.
Profunctors can be composed: if we have $\I\colon \C\prof \D$ and $\J\colon \D\prof \E$ then define the composite $\J\circ\I\colon \C\prof\E$ by \[\J\circ\I(c,e)\coloneqq \biggyvee_{d}\I(c,d)\otimes \J(d,e).\]

In the $\Truth$ case a profunctor $\I\colon R^{\op}\otimes S\to \Truth$ can in the usual way be identified with an upward closed subset $\widetilde\I\subset\R^{\op}\otimes \D$, we  can write $c\preceq_{\I}d$ for $(c,d)\in \tilde\I$, i.e.\ for $\I(c,d)=\true$ we have
  \[\text{if}\,\,c\preceq_{\I} d\,\,\text{and}\,\,(c,d)\le_{R^{\op}\otimes S}(c',d')\,\,\text{then}\,\,c'\preceq_{\I}d'.\]
This can make more sense if we expand it out:% and write $cId$ as an inequality $c\preceq d$:
  \[\text{if}\,\,c'\le_{R}c\preceq_{\I}d \le_{S} d' \,\,\text{then}\,\,c'\preceq_{\I} d'.\]
So we can say that a profunctor between preorders corresponds to preorder on $R\cup S$ extending the existing preorders.

In the $\Rbar$ case, the profunctor we are primarily interested in is the pairing between a real vector space and its dual.  (See below.)

\section{The nucleus of a profunctor}
In this section I will explain the construction of the nucleus of a profunctor: how, from a $\V$-profunctor $\M\colon \C^{\op}\otimes\D\to \V$, you get an adjunction $\M^{\ast}\colon \pre{\C}\leftrightarrows\opcopre{\D}\colon \M_{\ast}$ and then take its invariant part.  I will try to do this in a way that makes it clear that it is analogous to a basic linear algebra construction where to an $m\times n$ matrix $M$ you construct a pair of adjoint linear maps $M^{\ast}\colon \R^{m}\leftrightarrows\R^{n}\colon M_{\ast}$.  The first section on linear algebra can be skipped if you just want to get to the definition of the nucleus.

\subsection{Matrices and adjoint linear maps}
The notion of profunctor may be thought of as a categorification of the notion of a matrix.  In this section we will look at the decategorified version of the nucleus --- when categorifying it often helps to think carefully about the concept that is being categorified.  This section is not essential for what follows.

Given two finite sets $A$ and $B$ --- often taken to be $\{1,\dots ,n\}$ and $\{1,\dots, m\}$ for some natural numbers $n$ and $m$ --- a matrix can be understood as a function
   \[M\colon A\times B\to \R.\]
As all undergraduates understand --- or so we hope --- matrices are intimately related to linear maps between Euclidean spaces.  In particular, you can produce two linear maps
  \[M_\ast \colon \R^B\to \R^A,\qquad M^\ast \colon \R^A\to \R^B.\]
If you have an order on $A$ and on $B$, for instance, if they are $\{1,\dots ,n\}$ and $\{1,\dots, m\}$, then often you think of $\R^A$ and $\R^B$ as columns of numbers, and $M^\ast$ and $M_\ast$ as rectangular arrays of numbers, from this perspective, $M^\ast$ and $M_\ast$ will be mutually transposed arrays of numbers.  On top of this, $\R^A$ and $\R^B$ both have canonical inner product structures on them and with respect to these, the two linear maps are adjoint:
   \[\langle M^\ast p,q\rangle_{\R^B} = \langle p, M_\ast q\rangle_{\R^A},\qquad \text{for all }p\in \R^A, q\in\R^B\]

We should describe how we obtain $M_\ast$ and $M^\ast$ from $M$.  There are two standard ways to think about the vector space $\R^A$, one is as formal linear combinations of elements of $A$ and the other is as real-valued functions on the set $A$.  We will think in terms of functions, and so if $p\in \R^A$ is a vector and $a\in A$ then we will write $p(a)$ for the $a$ component of the vector.  As undergraduates, we would more likely have written $p_a$ for this.

For the first map, starting with the matrix $M\colon A\times B\to \R$, we take its adjoint in the category of sets to get a function $B\to\Set(A,\R)=  \R^A$ given by $b\mapsto(a\mapsto M(a,b))$.  We then use the fact that $\R^B$ is the free vector space on $B$ so there is a unique linear extension of that function to $M_\ast \colon \R^B\to \R^A$.  Explicitly, this linear extension is written as follows
  \[(M_\ast q)(a)\coloneqq \sum_{b\in B}M(a,b) q(b).\]

Similarly, for the second map we perform two steps.  Firstly we start with $M$ and take the adjoint function $A\to \R^B$ given by $a\mapsto(b\mapsto M(a,b))$.  Then we extend this by linearity to get a linear function $M_\ast \colon \R^A\to \R^B$ which is given by
  \[(M_\ast p)(b)\coloneqq \sum_{a\in A}M(a,b) p(a).\]

The vector spaces $\R^A$ and $\R^B$ have canonical inner products on them, namely, for $p,p'\in \R^A$ we have 
  \[  \langle  p,p'\rangle_{\R^A}\coloneqq \sum_{a\in A}p(a) p'(a).\]
These two functions we have constructed are adjoint as
 \[\langle M^\ast p,q\rangle_{\R^B} =\sum_{a\in A,b\in B}M(a,b)p(a) q(b)= \langle p, M_\ast q\rangle_{\R^A}.\]

That is all reasonably standard undergraduate material, however the analogue of  what we will want to do in the nucleus construction is to now take the fixed set of the adjunction.  This can be thought of  $\Fix(M^\ast M_\ast)\subset \R^B$ or as $\Fix(M_\ast M^\ast)\subset \R^A$ or as $\{(p,q)\in \R^A\times \R^B \mid M^\ast p= q\text{ and } p=M_\ast q\}$, these are all isomorphic spaces which can be described as the singular vectors of $M$ with singular value $1$.  I don't know any good reason for considering such a space associated to a matrix $M$, but I suspect there will be one.

\subsection{The nucleus construction}
\label{Section:NucleusConstruction}
We now see the nucleus constructin of a profunctor in a fashion so that it appears as the categorification of the above matrix construction.  Actually, in the case that the enriching category $\V$ is a poset, which we consider here, it is only really half a rung on the categorical ladder.

Fix the category $\V$ to be a complete, cocomplete, skeletal, closed symmetric monoidal category; our standard examples are $\Rbar$ and $\Truth$.  Also fix a profunctor $\M\colon \A^\op \otimes \B \to \V$.   We construct an adjunction between presheaf and opcopresheaf categories; the invariant part of this adjunction will be the nucleus of $\M$.

We proceed as we did for matrices.  Firstly, as the category $\Vcat$ of $\V$-categories is closed monoidal (see Section~\ref{Section:VcatClosedMonoidal}) with respect to the tensor product $\otimes$, the profunctor $\M\colon \A^\op\otimes \B\to \V$ has an adjoint
%Now suppose that $\M$ is a profunctor from $\A$ to $\B$, this means, using Pavlovic's convention, that it is a functor $\M\colon \A^\op \otimes \B \to \V$.  This is clearly the analogue of a matrix.  As we did with ordinary linear algebra, we will produce a pair of adjoint functors $\M^\ast \colon \presheaf{\A}\to \opcopre{\B}$ and $\M_\ast \colon \opcopre{\B}\to \presheaf{\A}$.
  \[\B\to [{\A^\op},\V]=\presheaf{\A}\]
given by $b\mapsto  \M({-},b)$.  As the presheaf category $\presheaf{\A}$ is complete (see Section~\ref{Section:PresheavesComplete}), we can take the unique continuous extension to the free completion of  $\B$, to get the continuous functor $\M_\ast \colon \opcopre{\B}\to \presheaf{\A}$.  %For a general $\V$ this would be written as an end:
%  \((\M_\ast Q)(a) = \int_b [Q(b),\M(a,b)]\), but when $\V$ is thin, i.e.~a poset then this becomes a meet:
   % \[(\M_\ast Q)(a) = \bigwedgie_b [Q(b),\M(a,b)].\]
We can write this using the cotensor structure as
 \[(\M_\ast Q)(a) = \bigwedgie_b (Q(b)\cotensor\M(a,b)).\]
This should be compared with the comparable linear algebra formula: \[(M_\ast q)(a)\coloneqq \sum_{b\in B} M(a,b)q(b).\]

When $\V$ is $\Rbar$ we get 
  \[(\M_\ast Q)(a) = \sup_b \{\M(a,b)- Q(b)\},\]
and when $\V$ is $\Truth$ we get
  \[(\M_\ast Q)(a) = \lefttruthval \forall b\bigl( Q(b)\Rightarrow \M(a,b)\bigr)\righttruthval.\]
or in terms of the associated down sets:
  \[\widetilde {\M_\ast Q}= \left\{a\in \A\mid a\M b\text{ for all } b\in \tilde{Q}\right\}.\]
Similarly for $\M^\ast $,  we take the adjoint of the profunctor $\M\colon \A^\op\otimes \B\to \V$ to get a functor $\A^\op\to [\B,\V]$ which is the same as a functor $\A\to [\B,\V]^\op=\opcopre{\B}$ given by $a\mapsto \M(a,{-})$.  As the presheaf category $\opcopre{\B}$ is cocomplete, we can take the unique cocontinuous extension to the presheaves category on $\A$, to get the cocontinuous functor $\M^\ast \colon  \presheaf{\A}\to \opcopre{\B}$.  The formula for this will use tensors and coproducts in $\opcopre{\B}= [\B,\V]^\op$ which are the same as cotensors and products in $ [\B,\V]$ which we know are given pointwise: thus  
  \[(\M^\ast P)(b) = \bigwedgie_a P(a)\cotensor\M(a,b).\]
Again this needs comparing with the comparable formula in linear algebra: $(M_\ast q)(a)\coloneqq \sum_{b\in B} M(a,b)q(b)$.

Now we have a cocontinuous functor $\M^\ast \colon \presheaf{\A}\to \opcopre{\B}$ and a continuous functor $\M_\ast \colon \opcopre{\B}\to \presheaf{\A}$.  The former is left adjoint to the latter:
 \[\opcopre{\B}(\M^\ast P,Q)\cong\presheaf{\A}(P,\M_\ast Q).\]
Both sides are isomorphic to $\bigwedgie_{a,b}\bigl(P(a)\otimes Q(b)\bigr)\cotensor\M(a,b)$.  This should be compared with the linear algebra case.% is telling:  $\langle M^\ast p,q\rangle_{\R^B} = \langle p, M_\ast q\rangle_{\R^A}=\sum_{a,b}M(a,b)p(a) q(b)$.

%
%The nucleus of the profunctor is going to be the centre, or invariant part, of the above adjunction, so I will explain what the centre is in general.  I don't know if this is standard terminology as I learnt of it in discussion with Tom Leinster and haven't seen it in the literature.

The nucleus $N(\M)$ of the profunctor $\M\colon \A^\op\otimes\B\to\V$ is then defined to be the invariant part of the adjunction $\M^\ast\colon \pre\A\leftrightarrows \opcopre{\B}\colon\M_\ast$.  This means that the nucleus can be thought of as any of the following three isomorphic  $\V$-categories:
\begin{itemize}
\item $\Fix(\M_\ast\M^\ast)\subset\pre{\A}$; 
\item $\Fix(\M^\ast\M_\ast)\subset\opcopre{\B}$;
\item $\bigl\{(P,Q)\mid \M^\ast(P)=Q,\ P=\M_\ast(Q)\bigr\}\subset  \pre{\A}\times \opcopre{\B}$. 
\end{itemize}
As both $\pre{\A}$ and $\opcopre{\B}$ are complete and cocomplete (see Section~\ref{Section:PresheavesComplete}), we know, by Theorem~\ref{thm:InvariantPartComplete}, that the nucleus is also both complete and cocomplete, we also know formulas for the limits and colimits.

\section{The Galois correspondence from a relation}
As mentioned in the introduction, in the case of $\Truth$, the nucleus of a profunctor construction recovers the classical Galois correspondence from a relation construction.  Indeed, in the context of formal and fuzzy context analysis~\cite{Belohlavek:FuzzyGaloisConnections,Pavlovic:FCA} this is what the nucleus of a profunctor was designed to generalize.

A set $R$ can be considered as a discrete preorder, where the relation is the identity.  This is the same thing as being a discrete $\Truth$ category, so that $R(r,r')=\false$ if $r\ne r'$ and $R(r,r)=\true$  for all $r,r'\in R$.  A relation $\I$ between two sets $G$ and $M$ can thus be considered as a profunctor between the associated discrete truth categories.  The poset of presheaves on $G$ can be identified with the powerset of $G$ ordered by subset inclusion, while the poset of opcopresheaves on $M$ can be identified with the powerset of $M$ ordered in the opposite way.

Then the adjunction associated to the profunctor $\I$ can be viewed as a Galois connection
  \[\I^{\ast}\colon \PP(G)\leftrightarrows \PP(M)^{\op}\colon \I_{\ast},\]
which is, unsurprisingly, the classical Galois connection associated to the relation.  So taking the invariant part of this recovers the classical duality or Galois correspondence between the closed subsets of $G$ and the closed subsets of $M$:
  \[\PP_{\cl}(G)\cong \PP_{\cl}(M)^\op.\]
These are both complete lattices by the general completeness results of Theorem~\ref{thm:InvariantPartComplete}.  For instance, the meet $\bigwedgie$ in $\PP_{\cl}(G)$ is the meet in $\PP(G)$ whereas the join $\biggyvee$ in $\PP_{\cl}(G)$ is the \emph{closure} of the join in  $\PP(G)$.

\section{The Legendre-Fenchel transform}
\label{sec:LF}

In this section we will construct the nucleus in the context of the tautological pairing between a vector space and its dual; this will lead us to the theory of the Legendre-Fenchel transform and we will see how various properties  arise as general category theoretic properties.

If $V$ is a real vector space then we can consider it as a discrete $\Rbar$-space, so $\dd(x,x')=+\infty$ if $x\neq x'$ and $\dd(x,x)=0$.  Similarly we can consider the dual space $\dual{V}$ to be a discrete $\Rbar$-space.  The tautological pairing $\langle{-},{-}\rangle\colon V\times \dual{V}\to \R\subset \Rbar$ can then be considered to be a profunctor.  It is easy to check that it is a distance non-increasing map, where $\Rbar$ has its standard $\Rbar$ metric: $\dd(a,a')=a'-a$ for $a$ and $a'$ finite.

   The $\Rbar$-space $\pre{V}$ of presheaves  on $V$ is immediately identifiable with the space of functions $\Fun(V,\Rbar)$ equipped with the $\Rbar$-metric
\begin{align*}
    \dd(f_1,f_2)&\coloneqq\sup_{x\in V}\{f_2(x)-f_1(x)\}\quad \text {for all}\ f_1,f_2\colon V\to \Rbar.
\end{align*}
The distance from a function $f_1$ to a function $f_2$ is the most that you have to go \scare{up} from $f_1$ to get to $f_2$, where going down means going \scare{up} a negative amount.   It is worth observing that the infinite points, i.e.~those functions $f$ for which $\dd(f,f)=-\infty$, are precisely the functions which take value $\pm \infty$ at some point in $V$.

Similarly we can identify the $\Rbar$-space $\opcopre{\dual{V}}$ of opcopresheaves on the dual of $V$ with the space of functions $\Fun(\dual{V},\Rbar)$ equipped with the opposite $\Rbar$-metric:
  \begin{align*}
  \dd(g_1,g_2)&\coloneqq\sup_{k\in \dual{V}}\{g_1(k)-g_2(k)\}\quad \text {for all}\,\,g_1,g_2\colon \dual{V}\to \Rbar.
  \end{align*}

If we now begin to construct the profunctor nucleus of the tautological pairing then the first thing that we get is an $\Rbar$-adjunction between function spaces:
  \[\Fun(V,\Rbar)\leftrightarrows\Fun({\dual{V}},\Rbar).\]
Here, we will move away from our previous convention for notating these maps and denote them both in the more standard way of $f\mapsto \fench{f}$.  We find that both maps have the same form, so  for $f\colon V\to \Rbar$ a function on $V$ and $g\colon \dual{V}\to \Rbar$ a function on $\dual{V}$
 we have%find, writing angle brackets for the tautological pairing,
 \begin{align*}
 \fench{f}(k)\coloneqq \sup_{x\in V}\big\{\langle x,k\rangle -f(x)\big\};\qquad
 \fench{g}(x)\coloneqq \sup_{k\in \dual{V}}\big\{\langle x,k\rangle -g(k)\big\}.
 \end{align*}
These are precisely the classical Legendre-Fenchel transform $\fench{{}}\colon \Fun(V,\Rbar)\to\Fun({\dual{V}},\Rbar)$ and its reverse $\fench{{}}\colon \Fun({\dual{V}},\Rbar)\to\Fun(V,\Rbar)$.

The fact that these maps form an adjunction means that \(\dd(\fench{f},g)=\dd(f,\fench{g})\).  Spelling this out, we have the following theorem.
\begin{thm}
\label{thm:LFadjunction}
Suppose that we have functions $f\colon V\to \Rbar$ and $g\colon \dual{V}\to \Rbar$  then
\begin{equation*}
  \sup_{k\in \dual{V}}\{\fench{f}(k)-g(k)\} = \sup_{x\in {V}}\{\fench{g}(x)-f(x)\}.
\end{equation*} 
\end{thm}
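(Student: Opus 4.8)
The plan is to obtain this as the $\Rbar$-instance of the adjunction $\M^{\ast}\dashv\M_{\ast}$ built in Section~\ref{Section:NucleusConstruction}, taking $\V=\Rbar$ and $\M$ to be the tautological pairing $\langle{-},{-}\rangle\colon V\times\dual{V}\to\Rbar$. Since $V$ and $\dual{V}$ are treated as discrete $\Rbar$-spaces, every $\Rbar$-valued function on $V$ is a presheaf and every one on $\dual{V}$ an opcopresheaf, so $\pre{V}=\Fun(V,\Rbar)$ with $\dd(f_1,f_2)=\sup_{x}\{f_2(x)-f_1(x)\}$ and $\opcopre{\dual{V}}=\Fun(\dual{V},\Rbar)$ with $\dd(g_1,g_2)=\sup_{k}\{g_1(k)-g_2(k)\}$, as recorded just before the theorem. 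The first thing I would check is that $\M^{\ast}$ and $\M_{\ast}$ really are the two transforms $f\mapsto\fench{f}$ and $g\mapsto\fench{g}$: specialising the formulas $(\M^{\ast}P)(b)=\bigwedgie_{a}P(a)\cotensor\M(a,b)$ and $(\M_{\ast}Q)(a)=\bigwedgie_{b}Q(b)\cotensor\M(a,b)$ to $\Rbar$, where the meet is $\sup$ and the cotensor $v\cotensor w$ is the residuation $[v,w]=w-v$, gives exactly $\fench{f}(k)=\sup_{x}\{\langle x,k\rangle-f(x)\}$ and $\fench{g}(x)=\sup_{k}\{\langle x,k\rangle-g(k)\}$.

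With those identifications in place the theorem is just the adjunction isomorphism $\opcopre{\dual{V}}(\M^{\ast}f,g)\cong\pre{V}(f,\M_{\ast}g)$ read off through the two metrics above: the left-hand hom-$\Rbar$-object is $\dd(\fench{f},g)=\sup_{k}\{\fench{f}(k)-g(k)\}$ and the right-hand one is $\dd(f,\fench{g})=\sup_{x}\{\fench{g}(x)-f(x)\}$, so the displayed equation follows immediately. Indeed, as noted in Section~\ref{Section:NucleusConstruction}, both hom-objects equal $\bigwedgie_{x,k}\bigl(f(x)\tensor g(k)\bigr)\cotensor\M(x,k)=\sup_{x\in V,\,k\in\dual{V}}\{\langle x,k\rangle-f(x)-g(k)\}$, which makes the symmetry of the two expressions manifest.

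If instead one wants a self-contained verification, one can expand both sides directly: the left-hand side becomes $\sup_{k}\{\sup_{x}\{\langle x,k\rangle-f(x)\}-g(k)\}$ and the right-hand side $\sup_{x}\{\sup_{k}\{\langle x,k\rangle-g(k)\}-f(x)\}$, and one then shows that both collapse to the symmetric double supremum $\sup_{x\in V,\,k\in\dual{V}}\{\langle x,k\rangle-f(x)-g(k)\}$. The only place where care is needed---and the step I would handle most carefully---is the interchange of a supremum with subtraction of a possibly infinite value (pulling $-g(k)$ through $\sup_{x}$, and $-f(x)$ through $\sup_{k}$), together with the check that $\langle x,k\rangle-f(x)-g(k)$ does not depend on how it is bracketed; each of these is a short finite case analysis according to whether $f(x)$ or $g(k)$ is $\pm\infty$, using the arithmetic tables of Section~\ref{sec:tables}. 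This is entirely routine, which is precisely why the categorical route is the one I would actually take: that bookkeeping has already been done once and for all in establishing $\M^{\ast}\dashv\M_{\ast}$.
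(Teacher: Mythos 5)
Your proposal is correct and follows essentially the same route as the paper: the theorem is obtained by specialising the general adjunction $\M^{\ast}\dashv\M_{\ast}$ of the nucleus construction to the tautological pairing, with both hom-objects identified with the symmetric expression $\bigwedgie_{x,k}\bigl(f(x)\otimes g(k)\bigr)\cotensor\langle x,k\rangle$, exactly as the paper does. Your additional remarks on the direct verification and the infinite-arithmetic bookkeeping are consistent with, but not required by, the paper's argument.
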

As mentioned in the introduction, this is stronger than the common assertion that we have a Galois connection as that is equivalent to \[0\ge\dd(f,\fench{g})\ \Longleftrightarrow\ 0\ge\dd(\fench{f},g).\]

On top of the adjointness, we know that  the transform is an $\Rbar$-map, i.e.~a distance non-increasing map so we have \(\dd(f_1,f_2)\ge\dd(\fench{f_1},\fench{f_2})\).  Spelling this out as well gives the following.
\begin{thm}
\label{thm:LegendreIsShort}
Suppose that we have functions $f_1,f_2\colon V\to \Rbar$  then
\begin{equation*}
  \sup_{x\in {V}}\{f_2(x)-f_1(x) \}  \ge  \sup_{k\in \dual{V}}\{\fench{f_1}(k)-\fench{f_2}(k) \}.
\end{equation*} 
\end{thm}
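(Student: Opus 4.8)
The plan is to obtain the inequality as an instance of the fact that $\fench{}$ is one of the enriched adjoint functors produced by the nucleus construction, and hence automatically an $\Rbar$-functor, i.e.\ a distance non-increasing (short) map. First I would recall that, under the identifications made just above, the Legendre-Fenchel transform $\fench{}\colon\Fun(V,\Rbar)\to\Fun(\dual{V},\Rbar)$ is precisely the cocontinuous functor $\M^\ast\colon\pre{V}\to\opcopre{\dual{V}}$ coming from Section~\ref{Section:NucleusConstruction} applied to the tautological pairing (and its reverse is the continuous functor $\M_\ast$). In particular it is a $\V$-functor, and for $\V=\Rbar$ a $\V$-functor between $\Rbar$-spaces is exactly a distance non-increasing map, so $\dd(\fench{f_1},\fench{f_2})\le\dd(f_1,f_2)$. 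Unfolding the maximal-climb metric on $\pre{V}=\Fun(V,\Rbar)$ and the maximal-fall metric on $\opcopre{\dual{V}}=\Fun(\dual{V},\Rbar)$ turns this into the displayed inequality, which is the theorem.

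For readers wanting a hands-on argument I would also give the direct pointwise verification: set $D\coloneqq\dd(f_1,f_2)=\sup_{x}\{f_2(x)-f_1(x)\}$, so $f_2(x)-f_1(x)\le D$ for every $x\in V$; then for each $k\in\dual{V}$ and each $x$ the quantity $\langle x,k\rangle-f_1(x)$ is bounded above by $\langle x,k\rangle-f_2(x)+D\le\fench{f_2}(k)+D$, so taking the supremum over $x$ gives $\fench{f_1}(k)\le\fench{f_2}(k)+D$, hence $\fench{f_1}(k)-\fench{f_2}(k)\le D$; taking the supremum over $k$ finishes it.

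The one genuine subtlety — and the step I would be most careful about — is the arithmetic of $\pm\infty$ spelled out in Section~\ref{sec:tables}. In the chain $\langle x,k\rangle-f_1(x)\le\langle x,k\rangle-f_2(x)+D$ one must check each case where $f_1(x)$ or $f_2(x)$ is infinite, using that $\langle x,k\rangle$ is always finite and that $(+\infty)-(+\infty)=(-\infty)-(-\infty)=-\infty$; and in passing from $\fench{f_1}(k)\le\fench{f_2}(k)+D$ to the difference inequality one must handle $\fench{f_2}(k)=\pm\infty$ — for instance $\fench{f_2}(k)=-\infty$ forces $f_2\equiv+\infty$, in which case $D=+\infty$ unless $f_1\equiv+\infty$ too, and that degenerate case is checked directly. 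The formal route via $\V$-functoriality sidesteps all of this bookkeeping, since the general $\Rbar$-category machinery absorbs the extended-real arithmetic once and for all.
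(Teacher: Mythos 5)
Your first paragraph is precisely the paper's argument: the theorem is presented there as nothing more than the unwinding, in terms of the maximal-climb and maximal-fall metrics, of the fact that the Legendre--Fenchel transform is the $\V$-functor $\M^\ast$ from the nucleus construction and hence a distance non-increasing map. Your supplementary pointwise verification, including the case-checking for $\pm\infty$, is correct and goes beyond what the paper records (it gives no separate proof), but the core route is the same.
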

\noindent There is also the analogous result for the reverse transform.
%We will see below how this gives us Toland-Singer duality.
Again, this is stronger than the usual order-theoretic result which just says that \[f_{1}\ge f_{2}\quad\Longrightarrow \quad\fench{f_{1}}\le \fench{f_{2}}.\]

The standard theory of the profunctor nucleus tells us that $f\mapsto \doublefench{f}$ is a closure operator, i.e.~it is idempotent and $f\ge \fench{f}$;
%, and
%if we restrict to the image inside $\Fun(V,\Rbar)$ then we get an isomorphism of $\Rbar$.  
it is widely known (see e.g.~\cite{Rockafellar})  that it is the operation of taking the lower semi-continuous, convex hull of a function, but it is worth sketching a proof of this in the language we are using here.
\begin{thm}
For a function $f\colon V\to \Rbar$, its closure $\doublefench{f}$ is the lower semicontinuous, convex hull of $f$.
\end{thm}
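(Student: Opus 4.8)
The plan is to compute $\doublefench f$ explicitly as the supremum of the affine minorants of $f$, and then to recognise that supremum as the lower semicontinuous convex hull. The starting point is the elementary ``point-slope'' observation: for $k\in\dual V$ with $\fench f(k)<\infty$, the affine function $\ell_k\colon x\mapsto\langle x,k\rangle-\fench f(k)$ satisfies $\ell_k\le f$ pointwise --- this is just a restatement of $\fench f(k)\ge\langle x,k\rangle-f(x)$ --- and, conversely, any affine minorant of $f$ with linear part $k$, say $x\mapsto\langle x,k\rangle-c$, forces $c\ge\sup_x\{\langle x,k\rangle-f(x)\}=\fench f(k)$ and so is dominated by $\ell_k$. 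Thus $\ell_k$ is the largest affine minorant of $f$ of slope $k$, and substituting into $\doublefench f(x)=\sup_k\{\langle x,k\rangle-\fench f(k)\}$ gives
\[\doublefench f=\sup\bigl\{a : a\text{ affine},\ a\le f\bigr\},\]
with the convention $\sup\emptyset=-\infty$; the few degenerate cases (when $f$ or $\fench f$ is identically $+\infty$, or $f$ attains the value $-\infty$) are checked directly from the arithmetic tables of Section~\ref{sec:tables}.

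Two of the three defining properties of a hull are now immediate. Being a pointwise supremum of continuous affine functions, $\doublefench f$ is lower semicontinuous and convex; and since every affine minorant lies below $f$, so does their supremum, giving $\doublefench f\le f$ --- this is also the closure-operator inequality supplied for free by the general nucleus theory. It remains to see that $\doublefench f$ is the \emph{largest} lower semicontinuous convex function below $f$, equivalently that the lower semicontinuous convex functions are exactly the fixed points of the closure operator. One direction is trivial: a fixed point equals the supremum of its affine minorants, hence is lower semicontinuous and convex. For the other, let $h$ be a proper lower semicontinuous convex function; I claim $h=\sup\{a:a\text{ affine},\ a\le h\}$. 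Granting this, if moreover $h\le f$ then every affine minorant of $h$ is one of $f$, whence $h\le\doublefench f$; combined with the fact that $\doublefench f$ is itself lower semicontinuous, convex and $\le f$, this exhibits $\doublefench f$ as the largest lower semicontinuous convex minorant of $f$, i.e.~as the hull.

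The one genuinely analytic ingredient --- and the main obstacle --- is this Fenchel--Moreau claim, that a proper lower semicontinuous convex $h$ is the supremum of its affine minorants. I would prove it by Hahn--Banach separation on epigraphs: fixing $x_0$ and a real number $r<h(x_0)$, the point $(x_0,r)$ lies outside the epigraph of $h$, which is convex (by convexity of $h$) and closed (by lower semicontinuity), so some closed hyperplane strictly separates it from $(x_0,r)$; a short computation converts a suitable such hyperplane into an affine function $a\le h$ with $a(x_0)>r$, the usual care being needed in directions along which the epigraph is ``vertical'', which one handles by perturbing by a fixed affine minorant. Letting $r\uparrow h(x_0)$ then yields $\sup\{a(x_0):a\le h\}\ge h(x_0)$, and the reverse inequality is clear. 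In the stated generality --- $\dual V$ the full algebraic dual of an untopologised $V$ --- ``lower semicontinuous'' should be read relative to the weak topology induced by $\dual V$, whose continuous linear functionals are exactly the elements of $\dual V$, so that the separating functional does lie in $\dual V$; in finite dimensions the point is moot. Finally, the improper cases --- where $h$ attains $-\infty$, so that $\doublefench h\equiv-\infty$ --- are precisely those in which ``lower semicontinuous convex hull'' must be read as the $\Gamma$-regularisation rather than literally as the largest lower semicontinuous convex minorant, as anticipated by the introduction's remark that the lower-semicontinuous part ``is only relevant when the function takes infinite values''.
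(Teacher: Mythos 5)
Your proposal is correct, and its computational core coincides with the paper's: both identify $\doublefench{f}$ with the upper envelope of the affine minorants of $f$ (the paper phrases this as translating each non-vertical hyperplane of slope $k$ down by $\dd(f,k)=\fench{f}(k)$ until it supports the graph, then taking the pointwise supremum $\bigproduct_{k\in\dual{V}}\dd(f,k)\pitchfork k$). The difference is in what happens after that identification. The paper's sketch simply \emph{declares} the envelope of supporting hyperplanes to be an alternative description of the lower semicontinuous convex hull and stops there, so the only analytic content it needs is the (easy) fact that $\ell_k=\langle\cdot,k\rangle-\fench{f}(k)$ is the largest affine minorant of slope $k$. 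You instead take the hull to mean the greatest lower semicontinuous convex minorant and then prove the two descriptions agree, which requires the Fenchel--Moreau ingredient: a proper lower semicontinuous convex function is the supremum of its affine minorants, established by Hahn--Banach separation of a point from the closed convex epigraph. That is the genuinely analytic step the paper's sketch elides by fiat, and your version is correspondingly more complete; your attention to the topology on $V$ (so that separating functionals land in $\dual{V}$) and to the improper cases where the hull must be read as the $\Gamma$-regularisation are both points the paper passes over silently. The only cost is length: for the paper's purposes the envelope description is the operative one (it is what makes $\doublefench{f}$ a weighted limit of linear presheaves), so the author can afford to treat the equivalence of definitions as standard background, whereas your argument makes the theorem self-contained.
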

\begin{proof}[Sketch of proof]
First let's recall what the lower semicontinuous, convex hull of a function $f$ is.  On the one hand it is the greatest lower semi-continuous, convex function which is pointwise less than or equal to $f$; another description is as follows.  

A supporting hyperplane of such a function $f$ is a non-vertical affine hyperplane in $V\times \Rbar$ which touches the graph of $f$ but which does not pass over it.  
The lower semicontinuous, convex hull of $f$ can defined to be the function whose graph is the envelope of the supporting hyperplanes of $f$.   

Each non-vertical affine hyperplane is the graph of a function of the form $x\mapsto \langle x,k\rangle+a$ where $k$ is a linear function on $V$ and $a$ is a constant.  The supporting hyperplanes of $f$ correspond to the functions of the form $k-\dd(f,k)$, so we translate the hyperplane vertically until it touches the graph of $f$, and the definition of $\dd(f,k)$ ensures that is the amount we translate by.  The function $k-\dd(f,k)$ is written in the categorical language as $\dd(f,k)\pitchfork k$.  The envelope of the supporting hyperplanes then corresponds to the pointwise supremum of these functions, so in other words, the lower semicontinuous convex hull of $f$ can be written as $\bigproduct_{k\in \dual{V}}  \dd(f,k)\pitchfork k$.  Now observe that $\dd(f,k)=\fench{f}(k)$ so that the given expression is exactly $\fench{(\fench{f})}$ as required.
\end{proof}
It might be of interest to category theorists who skipped the proof that the lower semicontinuous, convex hull of $f$ can be written as a weighted limit: the diagram is the embedding $J\colon \dual{V} \to \pre{V}$ of linear functions on $V$ into the space of all functions on $V$, and the weighting $\dd(f,{-})\colon \dual{V}\to \Rbar$ is the distance-from-$f$ function,
  \[\doublefench{f}=\colim{\dd(f,{-})}{J}=\bigproduct_{k\in \dual{V}}  \dd(f,k)\pitchfork k.\]
Another way of viewing this (see also~\cite[Section~3]{CohenGaubertQuadrat:DualityAndSeparation}) is as projecting $f$ into the subcategory of limits of linear presheaves.

The nucleus here is the isomorphism between the two $\Rbar$-spaces of closed functions.  Writing $\Cvx(V,\Rbar)$ for the $\Rbar$ space of lower semicontinuous, convex functions on $V$, we see that 
 the Legendre-Fenchel transform gives an isomorphism of $\Rbar$-spaces:
 \[\Cvx(V,\Rbar)\cong\Cvx({\dual{V}},\Rbar).\]
The fact that these are isomorphic as sets is the reasonably well-known Legendre-Fenchel duality.  The fact that they are isomorphic as $\Rbar$-spaces is a version of the slightly more obscure Toland-Singer duality~\cite{Toland:Duality,Singer:Duality}, which we spell out here.
\begin{thm}
\label{thm:TolandSingerWeak}
If $f_1,f_2\colon V\to \Rbar$ are lower semi-continuous, convex functions then \(\dd(f_1,f_2)=\dd(\fench{f_1},\fench{f_2})\), in other words,
\begin{equation*}
  \sup_{x\in {V}}\{f_2(x)-f_1(x) \}  =  \sup_{k\in \dual{V}}\{\fench{f_1}(k)-\fench{f_2}(k) \}.
\end{equation*} 
\end{thm}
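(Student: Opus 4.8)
The plan is to derive the equality from two applications of Theorem~\ref{thm:LegendreIsShort} combined with the fact that lower semi-continuous convex functions are exactly the fixed points of the closure operator $f\mapsto\doublefench{f}$. The conceptual point is that a distance non-increasing map which is invertible with a distance non-increasing inverse is automatically distance preserving; on $\Cvx$ the Legendre-Fenchel transform and its reverse play precisely these two roles, so the ``short in both directions'' phenomenon forces equality.

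Concretely, I would argue in three steps. First, Theorem~\ref{thm:LegendreIsShort} gives $\dd(f_1,f_2)\ge\dd(\fench{f_1},\fench{f_2})$ for \emph{arbitrary} functions on $V$; this is one half of the claimed identity and uses no convexity. Second, I apply the analogous statement for the reverse transform to the pair $\fench{f_1},\fench{f_2}$ on $\dual{V}$, which yields $\dd(\fench{f_1},\fench{f_2})\ge\dd(\doublefench{f_1},\doublefench{f_2})$. Third, since $f_1$ and $f_2$ are lower semi-continuous and convex they are closed, i.e.\ $\doublefench{f_i}=f_i$ by the preceding theorem identifying $\doublefench{f}$ with the lower semi-continuous convex hull of $f$; hence the right-hand side collapses to $\dd(f_1,f_2)$. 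Chaining the three facts gives $\dd(f_1,f_2)\ge\dd(\fench{f_1},\fench{f_2})\ge\dd(f_1,f_2)$, so all inequalities are equalities, which is exactly the displayed formula.

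Packaged categorically, this is the observation that the nucleus of the tautological pairing is the invariant part of the adjunction $\fench{{}}\colon\pre{V}\leftrightarrows\opcopre{\dual{V}}\colon\fench{{}}$, and that on the invariant part the two functors restrict to mutually inverse $\Rbar$-functors between $\Cvx(V,\Rbar)$ and $\Cvx(\dual{V},\Rbar)$. An equivalence between skeletal $\Rbar$-categories is an isomorphism, and an isomorphism of $\Rbar$-categories is by definition distance preserving; spelling that out is precisely the assertion of Theorem~\ref{thm:TolandSingerWeak}.

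I do not expect a serious obstacle here: the only thing needing care is the arithmetic of $\pm\infty$ behind the suprema --- in particular, checking that the chain of inequalities and the identity $\doublefench{f_i}=f_i$ remain valid when the functions take infinite values --- and this is governed by the conventions of Section~\ref{sec:tables}. The genuinely substantive input lies outside this argument, namely the identification in the previous theorem of $\doublefench{f}$ with the lower semi-continuous convex hull; granting that, the present statement is a purely formal consequence of Theorem~\ref{thm:LegendreIsShort}.
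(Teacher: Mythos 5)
Your argument is correct: the sandwich $\dd(f_1,f_2)\ge\dd(\fench{f_1},\fench{f_2})\ge\dd(\doublefench{f_1},\doublefench{f_2})=\dd(f_1,f_2)$, obtained from shortness of the transform in each direction plus $\doublefench{f_i}=f_i$ for closed functions, does force all the inequalities to be equalities, and your categorical gloss (an equivalence of skeletal $\Rbar$-categories is a distance-preserving isomorphism) is exactly how the paper itself justifies this theorem, which it states without a separate proof as an immediate consequence of the nucleus being an isomorphism of $\Rbar$-spaces. Where you diverge is in the explicit computation: the paper does not run the double-shortness argument but instead, in the stronger Toland--Singer theorem immediately following, uses the adjunction identity of Theorem~\ref{thm:LFadjunction} directly, namely $\dd(\fench{f_1},\fench{f_2})=\dd(f_1,\doublefench{f_2})=\dd(f_1,f_2)$. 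That one-line computation buys something your route does not: it only requires $f_2$ to be lower semi-continuous and convex, with $f_1$ arbitrary, which is the actual Toland--Singer statement. Your sandwich needs both functions to be closed, since you invoke $\doublefench{f_1}=f_1$ as well; it proves the theorem as stated here but would not yield the sharper version. Your handling of the $\pm\infty$ conventions is appropriately deferred to the general enriched framework, where both the shortness and adjunction statements are already established for arbitrary $\Rbar$-valued presheaves.
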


This is illustrated by Figure~\ref{Figure:TolandSinger}, where we have already observed that 
\[\dd(f_1,f_2)=1=\dd(\fench{f_1},\fench{f_2}),\qquad \dd(f_2,f_1)=3=\dd(\fench{f_2},\fench{f_1}).\]

Actually, the hypotheses of Toland and Singer are weaker than those in the above theorem in that they only require the function $f_2$ to be lower semicontinuous and convex.  However, this can be easily deduced.
\begin{thm}[Toland-Singer Duality]
If $f_1,f_2\colon V\to \Rbar$ are functions with $f_2$ lower semi-continuous and convex  then \(\dd(f_1,f_2)=\dd(\fench{f_1},\fench{f_2})\), in other words,
\begin{equation*}
  \sup_{x\in {V}}\{f_2(x)-f_1(x) \}  =  \sup_{k\in \dual{V}}\{\fench{f_1}(k)-\fench{f_2}(k) \}.
\end{equation*} 
\end{thm}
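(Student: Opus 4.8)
The plan is to obtain this as an immediate consequence of the adjunction property (Theorem~\ref{thm:LFadjunction}) together with the characterisation of lower semicontinuous convex functions as exactly the functions fixed by the closure operator $f\mapsto\doublefench{f}$. The key point is that in the quantity $\dd(\fench{f_1},\fench{f_2})$ one can trade the transform applied to $f_2$ back across the adjunction, and because $f_2$ is already closed, nothing is lost in doing so.

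Concretely, I would first apply Theorem~\ref{thm:LFadjunction} to the function $f_1\colon V\to\Rbar$ and to the function $g\coloneqq\fench{f_2}\colon\dual{V}\to\Rbar$. Its left-hand side is then $\sup_{k\in\dual{V}}\{\fench{f_1}(k)-\fench{f_2}(k)\}=\dd(\fench{f_1},\fench{f_2})$, while its right-hand side is $\sup_{x\in V}\{\fench{(\fench{f_2})}(x)-f_1(x)\}=\sup_{x\in V}\{\doublefench{f_2}(x)-f_1(x)\}=\dd(f_1,\doublefench{f_2})$. Hence $\dd(\fench{f_1},\fench{f_2})=\dd(f_1,\doublefench{f_2})$, using no hypothesis whatsoever on $f_1$.

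The second step is to invoke that $f_2$, being lower semicontinuous and convex, is a fixed point of the closure operator, i.e.\ $\doublefench{f_2}=f_2$ (its lower semicontinuous convex hull is itself, by the preceding theorem identifying $\doublefench{\cdot}$ with that hull). Therefore $\dd(f_1,\doublefench{f_2})=\dd(f_1,f_2)$, and combining with the previous step yields the asserted equality $\dd(f_1,f_2)=\dd(\fench{f_1},\fench{f_2})$.

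I do not expect a genuine obstacle here: the asymmetry in the hypotheses is explained by the fact that it is $f_2$, not $f_1$, whose transform is unwound via the adjunction. One could alternatively deduce the result from Theorem~\ref{thm:TolandSingerWeak} by first replacing $f_1$ with its closure $\doublefench{f_1}$ — legitimate since $\fench{f_1}=\fench{(\doublefench{f_1})}$, as $\fench{f_1}$ already lies in the image of the transform and is hence fixed by the closure operator on $\Fun(\dual{V},\Rbar)$ — and then reducing the remaining comparison $\dd(\doublefench{f_1},f_2)=\dd(f_1,f_2)$; but that detour still ultimately rests on the adjunction, so the direct route above is the cleanest. The only care needed anywhere is with the arithmetic of $\pm\infty$, which is already subsumed in the cited results.
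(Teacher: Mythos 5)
Your proof is correct and is essentially identical to the paper's: both apply the adjunction identity $\dd(\fench{f_1},g)=\dd(f_1,\fench{g})$ with $g=\fench{f_2}$ and then use $\doublefench{f_2}=f_2$ for $f_2$ lower semicontinuous and convex. The alternative detour you sketch is unnecessary, as you note.
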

\begin{proof}
Because $f_{2}$ is convex and lower semicontinuous we have ${f_{2}}=\doublefench{f_{2}}$, so from the $\Rbar$-adjunction (Theorem~\ref{thm:LFadjunction}) we find
 $\dd(\fench{f_1},\fench{f_2})=\dd({f_1},\doublefench{f_2})=\dd({f_1},{f_2})$, as required.
\end{proof}

We can now look at the interpretation of the fact that the nucleus is both complete and cocomplete (Section~\ref{Section:NucleusConstruction}).
  We obtain the formulas for the limits and colimits from Theorem~\ref{thm:InvariantPartComplete} and Section~\ref{Section:PresheavesComplete}.   
 \begin{thm}
 \label{thm:CvxLimitsColimits}
 The set of lower semicontinuous convex functions $\Cvx(V,\Rbar)$  has limits and colimits given as follows.  For $f,f_{i}\in \Cvx(V,\Rbar)$, $a\in \Rbar$, $x\in V$ and $\inf f_{i}$ being the point-wise infimum,
   \begin{align*}
   \bigl(\bigproduct f_i\bigr)(x) &= \sup(f_i(x));  
   &(a\cotensor f)(x) &=f(x)-a;\\
  \bigl(\bigcoproduct f_i\bigr)(x) &= \doublefench{(\inf f_i)}(x); 
  & (a\odot f)(x) &=f(x)+a.
\end{align*}
\end{thm}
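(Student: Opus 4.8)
First I would reduce the statement to Theorem~\ref{thm:InvariantPartComplete} together with the pointwise description of limits and colimits in presheaf $\V$-categories (Section~\ref{Section:PresheavesComplete}), specialised to $\V=\Rbar$. Recall the identifications from Section~\ref{sec:LF}: the nucleus of the tautological pairing is the invariant part of the adjunction $\fench{{}}\colon\pre{V}\leftrightarrows\opcopre{\dual{V}}\colon\fench{{}}$; the presheaf $\Rbar$-space $\pre{V}$ is $\Fun(V,\Rbar)$ with the maximal-climb metric; and the composite $GF$ occurring in Theorem~\ref{thm:InvariantPartComplete} is here the closure operator $f\mapsto\doublefench{f}$, so $\Cvx(V,\Rbar)=\Fix(\doublefench{{}})\subset\pre{V}$. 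Theorem~\ref{thm:InvariantPartComplete} then tells us that a product or a cotensor in $\Cvx(V,\Rbar)$ is literally the corresponding limit taken in $\pre{V}$, whereas a coproduct or a tensor in $\Cvx(V,\Rbar)$ is the $\doublefench{{}}$-closure of the corresponding colimit taken in $\pre{V}$.

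Next I would unwind those four (co)limits in $\pre{V}$, all of which are computed pointwise by Section~\ref{Section:PresheavesComplete} using the meet, join, residuation and tensor of $\Rbar$. Since $\Rbar$ carries the order $\ge$, its meet is $\sup$ and its join is $\inf$, while its residuation and tensor are subtraction and addition (Section~\ref{sec:tables}). So the product in $\pre{V}$ is the pointwise supremum, giving $(\bigproduct f_i)(x)=\sup_i f_i(x)$; the coproduct in $\pre{V}$ is the pointwise infimum, so applying the closure gives $(\bigcoproduct f_i)(x)=\doublefench{(\inf f_i)}(x)$; the cotensor in $\pre{V}$ is $(a\cotensor f)(x)=f(x)-a$, and being a limit it is already the cotensor in $\Cvx(V,\Rbar)$; and the tensor in $\pre{V}$ is $f(x)+a$.

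The one point that needs a genuine argument is the tensor: a priori Theorem~\ref{thm:InvariantPartComplete} only gives the tensor in $\Cvx(V,\Rbar)$ as $\doublefench{(f+a)}$, and I must check that this equals $f+a$, i.e.\ that adding a finite constant preserves membership in $\Cvx(V,\Rbar)$. For finite $a$ this is a one-line transform computation: $\fench{(f+a)}(k)=\fench{f}(k)-a$, and then $\doublefench{(f+a)}(x)=\doublefench{f}(x)+a=f(x)+a$ using $f=\doublefench{f}$; the values $a=\pm\infty$ are degenerate and are settled directly from the arithmetic tables of Section~\ref{sec:tables}. I expect this tensor check to be essentially the only obstacle: with it in hand, the theorem is bookkeeping with Theorem~\ref{thm:InvariantPartComplete} and the pointwise (co)limit formulas of Section~\ref{Section:PresheavesComplete}, and the analogous formulas on the opcopresheaf side $\opcopre{\dual{V}}$, should one want them, follow in exactly the same way.
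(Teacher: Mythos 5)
Your proposal is correct and follows essentially the same route as the paper: the theorem is obtained by specialising Theorem~\ref{thm:InvariantPartComplete} together with the pointwise (co)limit formulas of Section~\ref{Section:PresheavesComplete} to the adjunction $\fench{{}}\colon\pre{V}\leftrightarrows\opcopre{\dual{V}}\colon\fench{{}}$, with limits computed in $\pre{V}$ and colimits obtained by applying the closure $\doublefench{{}}$. Your observation that the closure on the tensor $\doublefench{(f+a)}=f+a$ is redundant because translation by a constant preserves lower semicontinuous convexity is exactly the remark the paper makes after the theorem statement, and your explicit transform computation (plus the $a=\pm\infty$ cases) only makes that remark more precise.
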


A quick sketch with two quadratic functions will show that in the general the pointwise infimum of convex functions is not convex, which justifies the need for taking the lower semicontinuous, convex hull.   For the tensor operation, we should similarly translate and then take the lower semicontinuous, convex hull, but it is clear that convexity is invariant under such a translation, which means that taking the hull is unnecessary.
By Section~\ref{sec:completeness}, the completeness and cocompleteness gives the nucleus two distinct structures as a module over the idempotent semiring $(\Rbar,\min,{+})$ as described in the introduction.

\end{document}